\newcommand\dela[1]{{\color{blue}{{\small{10.2017}}}}}
\newtheorem{thm}{Theorem}[section]
\newtheorem{lem}[thm]{Lemma}
\newtheorem{Assumption}{Assumption}[section]
\theoremstyle{definition}
\newtheorem{defn}{Definition}[section]
\newcommand{\scr}[1]{\mathscr #1}
\definecolor{wco}{rgb}{0.5,0.2,0.3}
\numberwithin{equation}{section}
\newtheorem{rem}{Remark}[section]
\newcommand{\ua}{\uparrow}
\title{{\bf Small Time Asymptotics for SPDEs with Locally Monotone Coefficients}
}
\author{{\bf Shihu Li, Wei Liu\footnote{Corresponding author: weiliu@jsnu.edu.cn}, Yingchao Xie}
\\
\\
\normalsize School of Mathematics and Statistics, Jiangsu Normal University,\\ \normalsize Xuzhou 221116, China}
\date{}
\begin{document}
\maketitle
\begin{abstract} This work aims to prove the small time large deviation principle (LDP) for a class of stochastic partial differential
equations (SPDEs) with locally monotone coefficients in generalized variational framework.  The main
result could be applied to demonstrate the small time LDP for various quasilinear and semilinear SPDEs such as  stochastic porous media equations, stochastic $p$-Laplace equations, stochastic Burgers type equation, stochastic 2D Navier-Stokes equation, stochastic power law fluid equation and stochastic Ladyzhenskaya model. In particular,  our small time LDP result seems to be new in the case of general quasilinear SPDEs with multiplicative noise.
\end{abstract}
\noindent
 AMS subject Classification:\ 60H15, 60F10, 76S05,  35J92, 35K57   \\
\noindent
 Keywords:  Small time asymptotics; Large deviation principle;  Locally monotone; 2D Navier-Stokes equations; Burgers equation;
 non-Newtonian fluids.
 \vskip 2cm

\def\R{\mathbb R}  \def\ff{\frac} \def\ss{\sqrt} \def\BB{\mathbb
B}
\def\N{\mathbb N} \def\kk{\kappa} \def\m{{\bf m}}
\def\dd{\delta} \def\DD{\Delta} \def\vv{\varepsilon} \def\rr{\rho}
\def\<{\langle} \def\>{\rangle} \def\GG{\Gamma} \def\gg{\gamma}
  \def\nn{\nabla} \def\pp{\partial} \def\tt{\tilde}
\def\d{\text{\rm{d}}} \def\bb{\beta} \def\aa{\alpha} \def\D{\scr D}
\def\E{\scr E} \def\si{\sigma} \def\ess{\text{\rm{ess}}}
\def\beg{\begin} \def\beq{\begin{equation}}  \def\F{\scr F}
\def\Ric{\text{\rm{Ric}}} \def\Hess{\text{\rm{Hess}}}\def\B{\mathbb B}
\def\e{\text{\rm{e}}} \def\ua{\underline a} \def\OO{\Omega}
 \def\b{\mathbf b}
\def\oo{\omega}     \def\tt{\tilde} \def\Ric{\text{\rm{Ric}}}
\def\cut{\text{\rm{cut}}} \def\P{\mathbb P} \def\ifn{I_n(f^{\bigotimes
 n})}
\def\fff{f(x_1)\dots f(x_n)} \def\ifm{I_m(g^{\bigotimes m})}
 \def\ee{\varepsilon}
\def\pm{\pi_{{\bf m}}}   \def\p{\mathbf{p}}   \def\ml{\mathbf{L}}
 \def\C{\scr C}      \def\aaa{\mathbf{r}}     \def\r{r}
\def\gap{\text{\rm{gap}}} \def\prr{\pi_{{\bf m},\varrho}}
  \def\r{\mathbf r}
\def\Z{\mathbb Z} \def\vrr{\varrho} \def\ll{\lambda}
\def\vare{{\varepsilon}}
\def \eref#1{\hbox{(\ref{#1})}}
\def\bt{\begin{theorem}}
\def\et{\end{theorem}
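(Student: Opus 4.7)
The plan is to convert the small time problem into a small noise problem by the classical time rescaling $X^{\vare}_t := X_{\vare t}$. Under this substitution, the rescaled process satisfies, on the fixed interval $[0,1]$, an SPDE whose drift carries a factor $\vare$ and whose diffusion carries a factor $\sqrt{\vare}$. The desired small time LDP for $\{X_t\}_{t \in [0,\vare]}$ therefore follows from a small noise LDP for $\{X^{\vare}\}_{\vare \to 0}$; since the drift is killed in the limit, the limiting rate function should depend only on the diffusion coefficient evaluated at the initial datum, exactly as in the classical finite-dimensional SDE case.

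I would then establish this small noise LDP via the weak convergence / variational approach of Budhiraja--Dupuis--Maroulas. The two hypotheses to be verified are: (i) for any sequence of deterministic controls $u^n \to u$ weakly in $L^2([0,1];U)$ of uniformly bounded $L^2$-norm, the skeleton (controlled) solutions $Z^{u^n}$ converge to $Z^u$ in $C([0,1];H) \cap L^2([0,1];V)$; and (ii) for predictable controls $u^{\vare}$ converging in law in the weak topology to $u$, the controlled stochastic equations $X^{u^{\vare},\vare}$ converge in law to $Z^u$ in the same path space. Both verifications rest on uniform energy estimates derived from the coercivity and growth conditions of the locally monotone framework, tightness in a suitable Polish space (Aldous-type criteria), and a Minty-type monotonicity argument to pass to the limit in the nonlinear drift. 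A preliminary step is the well-posedness of both the skeleton equation and the controlled stochastic equation, uniformly in $\vare$, which should follow from the existing variational well-posedness theory once the extra control term is absorbed into the drift.

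The main obstacle is precisely this passage to the limit under only local monotonicity with polynomial growth. The drift carries a small factor $\vare$ but must still be kept under uniform control through the coercivity inequality, and strong compactness of $X^{\vare}$ in the reflexive Banach space $V$ is unavailable, so the nonlinear term can only be treated in the weak topology. I therefore expect that the locally monotone subtraction $\<A(u^n) - A(Z^u),\cdot\>$ must be estimated by the norm difference up to a lower-order polynomial weight, allowing a Gronwall argument on the rescaled time scale. Matching this subtraction with the stochastic cross terms generated by $u^{\vare}$ uniformly in the control is the delicate point that distinguishes this proof from the purely monotone-coefficient case already treated in the literature, and is what makes the result non-trivial for quasilinear examples such as porous media, $p$-Laplace, Navier--Stokes and power-law fluid equations.
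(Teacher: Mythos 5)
Your route is genuinely different from the paper's.  The paper does not apply the weak convergence machinery to the rescaled equation $X^{\vare}$ at all.  It introduces the auxiliary zero-drift equation $Y^{\vare}$ with the same initial datum, quotes the small-noise LDP for $Y^{\vare}$ (which follows from the Budhiraja--Dupuis method applied to that simpler drift-free equation, already in the literature), and then spends the entire proof showing that $\mu^{\vare}$ and $\nu^{\vare}=\mathrm{Law}(Y^{\vare})$ are exponentially equivalent, i.e.\ $\lim_{\vare\to0}\vare\log\mathbb{P}(\sup_t\|X^{\vare}(t)-Y^{\vare}(t)\|_H^2>\dd)=-\infty$.  The equivalence is proved not by tightness/Minty arguments in law, but by explicit pathwise moment estimates: one applies It\^{o}'s formula to $\|X^{\vare}-Y^{\vare}\|_H^2$, localizes with stopping times, and controls the $q$-th moments uniformly in $q$ using Davis's martingale inequality with the sharp $q^{1/2}$ constant, then takes $q=2/\vare$ and applies Chebyshev.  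Your approach, if it could be pushed through, would have the aesthetic advantage of a single unified weak-convergence argument; the paper's approach buys explicit exponential rates without ever needing tightness of the controlled processes in $C([0,T];H)$, which is a serious obstruction in your sketch given that the coercive $V$-norm estimate degenerates as $\vare\int_0^T\|X^{\vare}\|_V^{\aa}\,dt\le C$ (so $\int\|X^{\vare}\|_V^{\aa}\sim\vare^{-1}$).

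The concrete gap in your sketch is where the local monotonicity condition actually gets used.  Condition (A2) bounds the monotonicity defect by $\rho(v_2)\|v_1-v_2\|_H^2$ with $\rho$ growing like a power of the $V$-norm.  To run a Gronwall argument on this (as you envisage in the last paragraph), one must control $\int_0^T\rho(\cdot)\,dt$ along the relevant trajectories, which requires a supremum-in-time $V$-norm bound with super-exponential tail.  In the quasilinear variational setting one cannot obtain this by applying It\^{o}'s formula to $\|\cdot\|_V^2$.  The paper's device is to require $V$ to be a 2-smooth Banach space (so that $L^p$ and $W^{1,p}$ examples are covered) and to use Seidler's BDG inequality for stochastic integrals in 2-smooth spaces with the dimension-free constant $\sqrt{q}$, applied to the zero-drift process $Y^{\vare}_n$, together with an approximation $x_n\to x$ in $H$ with $x_n\in V$ (since $x\in H$ alone gives no $V$-bound).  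Your sketch mentions neither the 2-smooth structure nor the $x_n$ approximation layer, and it is precisely these two ingredients that make the result reachable for stochastic porous media, $p$-Laplace, power-law fluid and Ladyzhenskaya equations.  Without them, the ``lower-order polynomial weight'' you wish to absorb in a Gronwall argument is not integrable with the tail control needed for an LDP.

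A smaller point: the rate function is not the diffusion coefficient ``evaluated at the initial datum''; it is the Freidlin--Wentzell rate function of the zero-drift equation $g(t)=x+\int_0^tB(g(s))\dot h(s)\,ds$, so $B$ is evaluated along the controlled path, not frozen at $x$.  This matters: if $B$ were frozen at $x$ the LDP would be a Gaussian one, whereas here it is genuinely nonlinear.
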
}
\def\bl{\begin{lemma}}
\def\el{\end{lemma}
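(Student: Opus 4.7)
The plan is to establish the small time LDP via the standard time-scaling transformation combined with the weak convergence approach of Budhiraja--Dupuis. First I would set $Y^{\vare}(t) := X(\vare t)$ and observe that $Y^{\vare}$ satisfies the rescaled SPDE
\[
 dY^{\vare}(t) = \vare\, A(Y^{\vare}(t))\,dt + \sqrt{\vare}\,B(Y^{\vare}(t))\,d\widetilde W(t),\qquad Y^{\vare}(0)=x_{0},
\]
where $\widetilde W(t)=\vare^{-1/2}W(\vare t)$ is again a cylindrical Wiener process. The small time LDP for $X$ on $[0,\vare]$ with speed $\vare$ is then equivalent to a small-noise LDP on $[0,1]$ with speed $\vare$ for $\{Y^{\vare}\}$. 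The natural candidate rate function should be
\[
 I(f) = \inf\Bigl\{\tfrac12\int_{0}^{1}\|h(s)\|_{U}^{2}\,ds\ :\ f(t) = x_{0} + \int_{0}^{t} B(f(s))h(s)\,ds\Bigr\}.
\]

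Next I would introduce the controlled SPDE obtained by Girsanov's theorem,
\[
 dY^{\vare,h}(t) = \vare A(Y^{\vare,h}(t))\,dt + \sqrt{\vare}\,B(Y^{\vare,h}(t))\,d\widetilde W(t) + B(Y^{\vare,h}(t))h(t)\,dt,
\]
for controls $h$ in a fixed $L^{2}$-ball $S_{N}$ equipped with the weak topology, and the corresponding skeleton equation $\dot Y^{h}(t)=B(Y^{h}(t))h(t)$, $Y^{h}(0)=x_{0}$. By the Budhiraja--Dupuis criteria, it suffices to verify two statements: (a) the skeleton admits a unique solution and $h\mapsto Y^{h}$ is continuous from $S_{N}$ into the path space, and (b) whenever $h_{n}\rightharpoonup h$ weakly in $S_{N}$ and $\vare_{n}\to 0$, one has $Y^{\vare_{n},h_{n}}\to Y^{h}$ in law in the path space.

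For step (a) I would construct $Y^{h}$ by Galerkin approximation within the locally monotone variational framework already recalled in the paper: derive energy estimates for the finite-dimensional projections using the coercivity condition, extract a weakly convergent subsequence, and identify the limit via the local-monotonicity trick (subtracting and exploiting the sign of $\langle A(u_{n})-A(u), u_{n}-u\rangle$ after localisation in an appropriate norm). Uniqueness follows from a pathwise estimate using local monotonicity together with a Gronwall lemma with weight $\exp(\int \rho(\|Y^{h}\|)\,ds)$. For step (b) I would first establish uniform-in-$\vare$ a priori bounds for $Y^{\vare,h}$ using It\^o's formula, the coercivity/growth conditions on $A$ and $B$, and Burkholder--Davis--Gundy, noting that the drift contribution $\vare A$ is tamed by its own small factor while $B(\cdot)h$ is controlled through the $L^{2}$ bound on $h$. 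Tightness in the path space would be obtained by an Aubin--Lions or fractional Sobolev compactness argument, and the limit identification would again invoke the local-monotonicity trick together with weak-strong convergence to pass to the limit in the term $B(Y^{\vare_{n},h_{n}})h_{n}$.

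The main obstacle is precisely the limit identification in step (b): because $A$ is only locally monotone, the standard monotonicity cancellation is available only after one knows a uniform bound on the solution in the stronger norm, so the a priori estimates must be strong enough to localise the monotonicity inequality along the approximating sequence. In parallel, passing to the limit in the cross term $B(Y^{\vare_{n},h_{n}})h_{n}$ under merely weak convergence of $h_{n}$ requires strong convergence of $Y^{\vare_{n},h_{n}}$ in a space on which $B$ is continuous in the Hilbert--Schmidt sense; arranging this strong convergence while simultaneously preserving the local-monotonicity estimate is the delicate point, and will dictate the precise assumptions on $A$ and $B$ used in applying the theorem to the quasilinear and semilinear examples listed in the abstract.
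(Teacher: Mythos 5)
Your plan correctly identifies the rescaling $X(\vare t)$, the correct candidate rate function (in which the drift $A$ plays no role), and the general shape of the Budhiraja--Dupuis criteria; but you then propose to run the weak convergence argument directly on the controlled version of the rescaled equation, keeping the $\vare A$ term. This is not the route the paper takes, and the route you propose hits a genuine obstruction which you flag at the end but do not resolve.

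The critical problem is that the skeleton $Y^h$ solves $\dot Y^h=B(Y^h)h$, and since $B$ only maps into $L_2(U;H)$, $Y^h$ is in general only $H$-valued, not $V$-valued. The local monotonicity condition (A2) is formulated for $v_1,v_2\in V$, and the term $\rho(v_2)\|v_1-v_2\|_H^2$ requires $\rho$ evaluated at $v_2$; in your natural comparison of $Y^{\vare_n,h_n}$ with $Y^h$, that would have to be $\rho(Y^h)$, which is not even defined (let alone integrable against $dt$) off $V$. Likewise the duality pairing $_{V^*}\langle A(Y^{\vare_n,h_n}),Y^{\vare_n,h_n}-Y^h\rangle_V$ appearing when you apply It\^{o}'s formula to $\|Y^{\vare_n,h_n}-Y^h\|_H^2$ makes no sense when $Y^h\notin V$. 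Compounding this, the coercivity in Remark \ref{c2.1} only gives a bound on $\vare\int_0^T\|Y^{\vare,h}(t)\|_V^\alpha\,dt$, not on $\int_0^T\|Y^{\vare,h}(t)\|_V^\alpha\,dt$ uniformly in $\vare$, so the Aubin--Lions compactness you invoke is unavailable; the same lack of a uniform $L^\alpha(0,T;V)$ bound means the weight $\exp(\int\rho\,ds)$ in your Gronwall argument is not controllable along the approximating sequence.

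The paper sidesteps all of this with an exponential equivalence argument: it introduces the genuinely drift-free auxiliary process $Y^\vare(t)=x+\sqrt{\vare}\int_0^t B(Y^\vare(s))\,dW(s)$, which satisfies the LDP with the same rate function by a (now easy) weak convergence argument with no $A$ at all, and then shows $X^\vare$ and $Y^\vare$ are exponentially equivalent. The delicate point is how to compare $X^\vare$ with $Y^\vare$ under (A2), and the resolution is a two-parameter approximation: first approximate $x\in H$ by $x_n\in V$, note that the drift-free process $Y_n^\vare$ started from $x_n$ stays in $V$, and control $\sup_t\|Y_n^\vare(t)\|_V$ with exponentially good tails via the BDG-type inequality in a 2-smooth Banach space (Lemma \ref{3.2}); with that $V$-bound in hand, $\rho(Y_n^\vare)$ is controlled on a stopping-time event, and (A2) can be used to compare $X_n^\vare$ and $Y_n^\vare$ (Lemma \ref{3.5}), after which exponential convergences in $n$ (Lemmas \ref{3.3} and \ref{3.4}) close the loop. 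The assumption $\|B(v)\|_\gamma^2\le C(1+\|v\|_V^2)$ and the 2-smoothness of $V$ are indispensable at exactly this step, and none of this is present in your proposal; without it, the local-monotonicity cancellation you hope to use for limit identification cannot be made rigorous.
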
}
\def\br{\begin{remark}}
\def\er{\end{remark}}
\def\bx{\begin{Example}}
\def\ex{\end{Example}}
\def\bd{\begin{definition}}
\def\ed{\end{definition}}
\def\bp{\begin{proposition}}
\def\ep{\end{proposition}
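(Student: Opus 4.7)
The plan is to reduce the small time large deviation principle to a small-noise Freidlin--Wentzell LDP via the standard time-rescaling $X^\varepsilon(t) := X(\varepsilon t)$, which, using Brownian scaling for the driving noise, produces an SPDE on a fixed interval $[0,T]$ whose drift is multiplied by $\varepsilon$ and whose diffusion coefficient is multiplied by $\sqrt{\varepsilon}$. Because the paper works in the generalized variational framework with locally monotone coefficients, the rescaled coefficients inherit the same structural hypotheses (coercivity, growth, local monotonicity, and hemicontinuity) uniformly in $\varepsilon \in (0,1]$, so existence and uniqueness of $X^\varepsilon$ follow from the well-posedness theory recalled in the paper. The task then becomes a Freidlin--Wentzell type LDP for $\{X^\varepsilon\}_{\varepsilon > 0}$ as $\varepsilon \downarrow 0$, with skeleton equation obtained by formally setting $\varepsilon = 0$ and replacing the Brownian input by a deterministic control $u \in L^2([0,T];U)$.

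The proof then proceeds via the weak convergence approach of Budhiraja--Dupuis, which reduces the Laplace principle (equivalent to the LDP with a good rate function) to two conditions: (i) for every $N > 0$, the set $\{X^0_u : u \in S_N\}$ of skeleton solutions indexed by controls of $L^2$-norm at most $N$ is precompact in the path space $C([0,T];H) \cap L^2([0,T];V)$; and (ii) whenever $u^\varepsilon \to u$ in distribution as $S_N$-valued random variables (with the weak topology on $S_N$), the corresponding controlled solutions $X^\varepsilon_{u^\varepsilon}$ converge in distribution to $X^0_u$ in that path space. Both (i) and (ii) are established by applying Itô's formula to $\|X^\varepsilon\|_H^2$, exploiting the coercivity assumption to derive uniform a priori estimates in $L^\infty([0,T];H) \cap L^2([0,T];V)$, and then invoking a tightness argument based on an Aubin--Lions type compactness lemma for the Gelfand triple $V \hookrightarrow H \hookrightarrow V^*$.

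Identification of the limit as the skeleton solution is carried out by a Minty monotonicity argument adapted to the locally monotone setting: one tests the weak limit equation against $X^0_u - \phi$ for smooth $\phi$, uses local monotonicity to control the difference, and then passes to the limit $\phi \to X^0_u$. For the controlled stochastic equation, the martingale part is handled by a Burkholder--Davis--Gundy estimate combined with the $\sqrt{\varepsilon}$ factor, which shows that the stochastic integral contribution vanishes in probability as $\varepsilon \downarrow 0$, so that in the limit one recovers exactly the skeleton dynamics driven by $u$.

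The main obstacle is precisely the local rather than global monotonicity of the drift: the monotonicity defect is of the form $\eta(\cdot) \|X - Y\|_H^2$ with $\eta$ depending on the solutions themselves, which prevents a direct closure of the Minty trick. The standard remedy is a stopping-time/cut-off argument, passing to the limit on the event where both the approximating and limit solutions remain bounded in a suitable norm, and then removing the localization using the uniform a priori bounds together with Gronwall's inequality. A secondary subtlety is to verify that all constants entering the energy estimates are uniform in $\varepsilon \in (0,1]$; this is ensured by observing that the rescaled drift $\varepsilon A$ has coercivity constant at least as good as that of $A$, while the growth and local monotonicity constants are unchanged. The applications to the concrete examples (stochastic porous media, $p$-Laplace, Burgers, 2D Navier--Stokes, power-law fluids, Ladyzhenskaya model) then follow by verifying the abstract hypotheses, as already done in the well-posedness literature cited in the paper.
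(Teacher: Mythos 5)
Your proposal identifies the correct time rescaling $X^\varepsilon(t)=X(\varepsilon t)$ and correctly identifies the limiting skeleton equation, but the route you then take diverges from the paper's and contains a genuine gap. You propose to run the Budhiraja--Dupuis weak-convergence machinery directly on the rescaled family $X^\varepsilon$, claiming ``uniform a priori estimates in $L^\infty([0,T];H)\cap L^2([0,T];V)$'' as the basis for tightness and an Aubin--Lions compactness step. This claim is false: the coercivity in the rescaled equation is only $\varepsilon\,{}_{V^*}\langle A,v\rangle_V$, so It\^o's formula yields at best a bound on $\varepsilon\int_0^T\|X^\varepsilon\|_V^{\alpha}\,dt$, i.e.\ $\int_0^T\|X^\varepsilon\|_V^{\alpha}\,dt=O(1/\varepsilon)$. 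The $V$-estimates therefore blow up as $\varepsilon\to0$, the Aubin--Lions argument does not close, and the subsequent Minty step (which needs uniform control in $V$ to pass $\langle A(X^\varepsilon)-A(\phi),X^\varepsilon-\phi\rangle$ to the limit) has nothing to stand on. The drift $\varepsilon A(X^\varepsilon)$ does vanish in $V^*$-norm, but you need convergence in $C([0,T];H)$, and without uniform $V$-bounds there is no mechanism in your argument to produce that.

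What the paper actually does is to avoid the direct weak-convergence analysis of $X^\varepsilon$ entirely. It introduces the zero-drift process $Y^\varepsilon(t)=x+\sqrt{\varepsilon}\int_0^t B(Y^\varepsilon)\,dW$, whose small-noise LDP with the stated rate function follows by standard weak convergence (here no $V$-coercivity is needed, only the $H$-Lipschitz structure of $B$). The heart of the proof is then to show that $X^\varepsilon$ and $Y^\varepsilon$ are \emph{exponentially equivalent}, i.e.
$\lim_{\varepsilon\to0}\varepsilon\log\mathbb{P}\bigl(\sup_{t\le T}\|X^\varepsilon(t)-Y^\varepsilon(t)\|_H^2>\delta\bigr)=-\infty$.
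This is established by a chain of lemmas: exponential tail control for the (\,$\varepsilon$-weighted\,) energy of $X^\varepsilon$; a $V$-regularization step replacing $x\in H$ by $x_n\in V$ so that $Y_n^\varepsilon$ admits a $V$-norm estimate, obtained via the 2-smooth Banach space BDG inequality of Seidler whose $q^{1/2}$ constant is essential because one eventually takes $q=2/\varepsilon$; exponential closeness of $X_n^\varepsilon$ to $X^\varepsilon$ and of $Y_n^\varepsilon$ to $Y^\varepsilon$ via local monotonicity, an exponential weight $e^{-\varepsilon\int(K+\rho)}$, and stopping times; and finally exponential equivalence of $X_n^\varepsilon$ and $Y_n^\varepsilon$ for fixed $n$ using (A2)--(A3), Young's inequality, and the $V$-bound on $Y_n^\varepsilon$. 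None of these ingredients -- the exponential-equivalence reduction, the sharp-constant martingale inequality, the $V$-estimate in the 2-smooth Banach space, the stopping-time localization -- appears in your sketch, and they cannot be bypassed: the degenerating coercivity is precisely why this detour is needed and is emphasized in the paper's introduction.
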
}
\def\bc{\begin{corollary}}
\def\ec{\end{corollary}}

\newcommand{\ce}{\begin{eqnarray*}}
\newcommand{\de}{\end{eqnarray*}}

\section{Introduction}

The small time LDP mainly studies the asymptotic behavior of the tails of a family of probability distributions at a given point in space when the time is very small. Specifically, we focus on the limiting behavior of the solution in time interval $[0,t]$ as $t$ goes
to zero. The study of the small time asymptotics (large deviations) of finite dimensional diffusion processes was initiated
by Varadhan in the influential work \cite{Va2}.
Due to its wide applications in extremal events arising in risk management, mathematical finance, statistical mechanics, quantum physics
and many other areas, large deviation theory has become an important component of modern applied probability, see, e.g. \cite{ABBF,BBF04,BDM,C,FFK12,FJ11,FJ12,FW,St,Va1,Va2} and references therein.

Another main point being that the small time
behaviour of a diffusion process can be characterized in terms of an energy/distance
function on a Riemannian manifold, whose metric is induced from the inverse of
the diffusion coefficient, that is, such small time asymptotics will be useful to get the following Varadhan identity
\begin{equation*}
\lim_{t\rightarrow0}2t\log\mathbb{P}(X(0)\in A_1,X(t)\in A_2)=-d^2(A_1,A_2),
\end{equation*}
where $d$ is an appropriate Riemann distance associated with the diffusion generated by $X$, see, e.g. \cite{A13,AH05,CFZ,HM,Va3,Zh00} and references therein.

Apart from the above motivations, the small time asymptotic itself is also theoretically interesting, which has been studied a lot in the literatures. For instance,  the small time asymptotics of infinite dimensional diffusion processes were
studied in \cite{AK98,AZ02,FZ99,HR03,Zh00}.  Subsequently, many authors have endeavored to derive the small time LDP for different types of SPDEs. An important development concerning small time LDP for stochastic 2D Navier-Stokes equation was established by Xu and Zhang \cite{XZ09}. In \cite{RZ12}, R\"{o}ckner and Zhang studied small time LDP for stochastic 3D tamed Navier-Stokes equation. Moreover, the second named author with R\"{o}ckner and Zhu \cite{LRZ} also obtained the small time LDP for stochastic 2D quasi-geostrophic equations in the sub-critical case. The small time LDP  of stochastic 3D primitive equations was investigated by Dong and Zhang \cite{DZ18}. Recently, the small time LDP of scalar stochastic conservation laws was also studied in \cite{Zhang19}. The reader might refer to \cite{CG,CFZ,J11,LS17} and references therein for further results on this subject.

However, most papers in the literature investigated small time asymptotics (LDP) only for semilinear type SPDEs. On the other hand, some very interesting quasilinear SPDEs have been studied a
lot recently, such as stochastic porous media equation and stochastic $p$-Laplace equation, see e.g. \cite{Gess1,Gess2,BLR11,Liu09,L,LR15,LR18,MZ19,RZ2,W15}) and references therein. We would like to know whether small time asymptotics (LDP) results also hold for those SPDE models. This is one of the main motivations for us to study the small time LDP for a class of nonlinear
SPDEs, where the coefficients satisfy local monotonicity condition under the (generalized) variational framework.

The variational framework has been used intensively for studying SPDE where
the coefficients satisfying the classical monotonicity and coercivity conditions. It was first investigated in the seminal works of Pardoux \cite{Pa} and
Krylov and Rozovskii \cite{KR}, where they adapted the monotonicity tricks to prove the existence and uniqueness of solutions for a class of SPDE. Recently, this
framework has been substantially extended by  the second named author and R\"{o}ckner in \cite{LR10,LR13,LR15,LR18}
for more general class of SPDE with coefficients satisfying the generalized coercivity and local monotonicity conditions. In recent years, various  properties for SPDEs with monotone or locally monotone coefficients has been intensively investigated in the literature, such as small noise LDP \cite{L,LTZ,RZ2,XZ18}, random attractors \cite{Gess1,Gess2,BLR11,GLS}, Harnack inequality and applications \cite{Liu09}, Wong-Zakai approximation and support theorem
\cite{MZ19}, ultra-exponential
convergence \cite{W15},
and existence of optimal controls \cite{CDF}.

The proof of the main result here mainly follows the idea in Zhang's work \cite{Zh00} by using exponential equivalence arguments, which is a very powerful method  used by many
scholars to study the small time LDP for SPDEs, see, e.g. \cite{CG,DZ18,LS17,LRZ,RZ12,XZ09,Zhang19}.  More precisely, consider a zero drift stochastic differential equation with the same initial data (see \eref{Y} below), where the small noise and small time asymptotics problems are equivalent. It is easy to see that the small noise LDP for the solution $Y^\varepsilon$ of zero drift stochastic differential equation holds, thus our task is to show that the law of  $X^\varepsilon$ and $Y^{\varepsilon}$ are exponentially equivalent (see \eref{Z} below).
Comparing with some related works on small time LDP for SPDEs,
to deal with the stochastic differential equation with zero drift,
 one usually assumes that there exists another Hilbert space $H^1$ which is densely embedded in state space $H$. Working in the space of continuous $H^1$-valued trajectories, one is able to get the $H^1$-norm estimates by applying It\^{o}'s formula to $\|\cdot\|_{H^1}^2$.  However, in the variational framework, we work with the Gelfand triple $V\subset H\subset V^*$, where $V$ is a reflexive Banach space such that $V\subset H$ is continuously and densely, and it is unavailable to get the $V$-norm estimates by applying It\^{o}'s formula to $\|\cdot\|_V^2$ (e.g. in the quasilinear SPDE case). In order to overcome this difficulty, we use the concept of 2-smooth Banach space and get the $V$-norm estimates using the crucial BDG type inequality proved by Seidler \cite{Se10} (cf. \cite{ZBL} for recent generalization) for stochastic integrals in the 2-smooth Banach space, where the sharp constant $p^{1/2}$ also plays an important role in our proof. This 2-smooth Banach space is
introduced for establishing a theory of stochastic integration in Banach spaces
and typical
examples of such spaces are  $L^p$ spaces with $p\geq2$ and Sobolev spaces  $W_0^{s,p}$ with $p\geq2$ and $s\geq1$.
Thus, our
main result is applicable to various types of  SPDEs
such as stochastic porous media equation, stochastic $p$-Laplace equation,  stochastic Burgers type equation, stochastic 2D Navier-Stokes equation, stochastic power law fluid equation and stochastic Ladyzhenskaya model. In particular, by applying the abstract result to concrete models, our main result could cover the results in \cite{XZ09,LS17}, where the small time LDP for stochastic 2D Navier-Stokes equation and stochastic Ladyzhenskaya model was studied respectively. Moreover, to the best of our knowledge, our small time LDP results for general quasilinear SPDEs with multiplicative noise (such as porous media equation and $p$-Laplace equation) seem to be new in the literature.

The rest of the paper is organized as follows. In Section \ref{Sec Main Result}, we introduce the variational framework and formulate our main result.
Section \ref{Sec Proof of Thm1} is devoted to proving our main result. In Section \ref{example}, we apply the main result to various SPDE models as applications.
Throughout the paper, $C$ and $C_p$ will denote positive constants which may change from line to line, here $C_p$  emphasize the  dependence on parameter
$p$.

\section{Framework and main result} \label{Sec Main Result}
Let $(H,\langle\cdot,\cdot\rangle)$ be a real separable Hilbert space identified with
its dual space $H^*$ by the Riesz isomorphism. Let $(V,\langle\cdot,\cdot\rangle_{V})$ be a reflexive Banach space which is
continuously and densely embedded into $H$. Then we have the following Gelfand
triple:
$$V\subset H\equiv H^*\subset V^*,$$
where $V^*$
is the dual space of $V$. Let $_{V^*}\langle\cdot,\cdot\rangle_V$ denote the dualization between $V$ and $V^*$, then it follows that
$$_{V^*}\langle u,v\rangle_V=\langle u,v\rangle,~u\in H,v\in V.$$
Let $\{W(t)\}_{t\geq0}$ be a cylindrical Wiener process in a separable Hilbert space $(U,\langle\cdot,\cdot\rangle_U)$ on a complete filtered
probability space $(\Omega,\mathcal{F},\mathcal{F}_t;\mathbb{P})$. Let $(L_2(U;H),\|\cdot\|_2)$ denote the space of all Hilbert-Schmidt operators from $U$ to $H$.

In this paper, we consider the following stochastic evolution equation:
\begin{equation}\label{main equation}
\left\{ \begin{aligned}
&dX(t)=A(t,X(t))dt+B(X(t))dW(t),\\
 &X(0)=x\in H,
\end{aligned} \right.
\end{equation}
where $A:[0,T]\times V\rightarrow V^*$ and $B: V\rightarrow L_2(U;H)$ are measurable.

\medskip
Let us now state the precise conditions on the coefficients of (\ref{main equation}). 

\begin{Assumption}\label{Assumption1}
For fixed $\alpha>1$, there exist constants $\beta\geq0$, $\eta>0$, $K$ and $C$ such that the following conditions hold for all $v,v_1 ,v_2\in V$ and $t\in[0,T]$.
\begin{enumerate}
\item [$(A1)$]
(Hemicontinuity) The map $s\mapsto_{V^*}\langle A(t,v_1+sv_2),v\rangle_V$ is continuous on $\mathbb{R}$.

\item [$(A2)$]
(Local monotonicity)
 \begin{eqnarray*}
2_{V^*}\langle A(t,v_1)-A(t,v_2),v_1-v_2\rangle_V
\leq-\eta\|v_1-v_2\|_V^{\alpha}+(K+\rho(v_2))\|v_1-v_2\|_H^2,
 \end{eqnarray*}
where $\rho:V\rightarrow[0,+\infty)$ is a measurable function and locally bounded in  $V$ such that
$$\rho(v)\leq C(1+\|v\|_V^{\alpha})(1+\|v\|_H^{\beta}).$$

\item [$(A3)$] (Growth)
 \begin{eqnarray*}
 \|A(t,v)\|_{V^*}^{\frac{\alpha}{\alpha-1}}\leq C(1+\|v\|_V^{\alpha})(1+\|v\|_H^{\beta}).
 \end{eqnarray*}

\end{enumerate}
\end{Assumption}
In order to study
the small time LDP, we also need to estimate the stochastic integrals in the Banach space $V$. For a more specific example, consider the stochastic $p$-Laplace equation, it is common to take $V=W_0^{1,p}$ for $p\geq2$ as in \cite{LR15} and therefore we need to ensure
the existence of the stochastic integral in \eref{main equation} as an $W_0^{1,p}$-valued process. We recall
that the Sobolev spaces $W_0^{1,p}$
with $p\geq2$ belong to the class
of 2-smooth Banach spaces since they are isomorphic to $L^p(0,1)$ according to \cite[Remark 2 in Section 4.9]{Tr} and hence they are well suited for the stochastic It\^{o} integration (see e.g. Brz\'{e}zniak et al. \cite{Br97,BP99} for the precise construction of the stochastic integral).

In this work, we assume $V$ is a 2-smooth Banach space.
Let us denote by
$\gamma(U,V)$ the space of the $\gamma$-radonifying operators from $U$ to $V$.  We recall that $\Upsilon\in\gamma(U,V)$ if the series
$$\sum_{k\geq1}\gamma_k\Upsilon(u_k)$$
converges in $L^2(\tilde{\Omega},U)$ for any sequence $(\gamma_k)_{k\geq0}$ of independent Gaussian real-valued
random variables on a probability space $(\tilde{\Omega},\tilde{\mathcal{F}},\tilde{\mathbb{P}})$ and any orthonormal basis $(u_k)_{k\geq0}$
of $U$. Then, the space $\gamma(U,V)$ is endowed with the norm
 \begin{eqnarray*}
 \|\Upsilon\|_{\gamma(U,V)}:=\left(\tilde{\mathbb{E}}\left\|\sum_{k\geq1}\gamma_k\Upsilon(u_k)\right\|_V^2\right)^{1/2} \end{eqnarray*}
(which does not depend on $(\gamma_k)_{k\geq0}$, nor on $(u_k)_{k\geq0}$ and is a Banach space). In the following,
we shall write $\|\cdot\|_\gamma$ instead
of $\|\cdot\|_{\gamma(U,V)}$ for the simplicity of notations.
\begin{rem}\label{2-smooth}
 If $V$ is a separable Hilbert space, clearly, $V$ is 2-smooth. In this case, $\gamma(U,V)$
consists of all Hilbert-Schmidt operators of mapping $U$ into $V$, and
$ \|\cdot\|_{\gamma}=\|\cdot\|_{2}$ (see, e.g. \cite[Example 2.8]{Zh10}). Typical
examples of 2-smooth Banach space include every Hilbert space, $L^p$ spaces with $p\geq2$ and Sobolev spaces $W_0^{s,p}$ with $p\geq2$ and $s\geq1$.
\end{rem}
\medskip
We assume the following conditions on $B$.

\begin{Assumption}\label{Assumption2}
There exists constant $C$ such that the following conditions hold for all $v,v_1 ,v_2\in V$.
 \begin{eqnarray*}
&&\|B(v)\|_\gamma^2\leq C(1+\|v\|_V^2);
\\&& \|B(v_1)-B(v_2)\|_2^2\leq C\|v_1-v_2\|_H^2.
 \end{eqnarray*}
\end{Assumption}
\begin{rem}\label{c2.1}
 By Assumption \ref{Assumption1} and Assumption \ref{Assumption2}, the coercivity of $A$ and $B$ is easily obtained as
 $$_{V^*}\langle A(t,v),v\rangle_V+\|B(v)\|_2^2+\frac{\eta}{2}\|v\|_{V}^{\alpha}\leq C(1+\|v\|_{H}^2).$$
\end{rem}

\medskip
Now, we recall the following definition.

\begin{defn}\label{S.S.}
A continuous $H$-valued $(\mathcal{F}_t)$-adapted process $\{X(t)\}_{t\in[0,T]}$ is called a strong solution of \eref{main equation}, if for its $dt\otimes\mathbb{P}$-equivalence class  we have
$$X \in L^{\alpha}([0, T]\times\Omega; dt\otimes\mathbb{P};V)\cap L^2([0, T]\times\Omega; dt\otimes\mathbb{P};H)$$ and the following identity hold $\mathbb{P}$-a.s.
 \begin{eqnarray*}\label{def1}
X(t)=x+\int_0^tA(s,X(s))ds+\int_0^tB(X(s))dW(s),~~t\in[0,T].
\end{eqnarray*}
\end{defn}

The following well-posedness result is due to the second name author and R\"{o}ckner \cite[Theorem 1.1]{LR10}.

\begin{lem}\label{Lem1}
Suppose that the conditions in Assumption \ref{Assumption1} and Assumption \ref{Assumption2} hold, then \eqref{main equation} has a unique solution $\{X(t)\}_{t\in[0,T]}$ such that for any $p\geq2$
$$\mathbb{E}\left(\sup_{t\in[0,T]}\|X(t)\|_{H}^p+\int_0^T\|X(t)\|_V^{\alpha}dt\right)<\infty.$$
\end{lem}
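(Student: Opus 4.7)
Since the statement is cited from Liu--R\"ockner \cite{LR10}, my plan is to sketch the standard variational proof adapted to the locally monotone setting, with the two main ingredients being the Galerkin approximation together with a monotonicity-trick identification of the nonlinearity.

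\textbf{Step 1: Galerkin approximation.} I would pick an orthonormal basis $\{e_k\}_{k\geq 1}\subset V$ of $H$, let $H_n=\mathrm{span}\{e_1,\dots,e_n\}$, and consider the projection $P_n:V^*\to H_n$ defined by $P_n y=\sum_{k=1}^n {}_{V^*}\langle y,e_k\rangle_V\,e_k$. The finite-dimensional SDE
\begin{equation*}
dX_n(t)=P_n A(t,X_n(t))\,dt+P_n B(X_n(t))\,dW(t),\qquad X_n(0)=P_n x,
\end{equation*}
has a unique (local) strong solution by the hemicontinuity (A1), local monotonicity (A2) and growth (A3), together with the continuity of $B$ from Assumption~\ref{Assumption2}. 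Global existence will follow once the a priori estimates below are in place.

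\textbf{Step 2: A priori estimates.} Applying It\^o's formula to $\|X_n(t)\|_H^2$ and using the coercivity stated in Remark~\ref{c2.1}, together with the BDG inequality, one controls
\begin{equation*}
\mathbb{E}\Big(\sup_{t\in[0,T]}\|X_n(t)\|_H^{2}\Big)+\eta\,\mathbb{E}\int_0^T\|X_n(t)\|_V^{\alpha}\,dt\leq C(1+\|x\|_H^2).
\end{equation*}
Iterating with $\|X_n(t)\|_H^{p}$ for arbitrary $p\geq 2$ (again via It\^o, coercivity and BDG, using the growth bound on $\|B\|_2^2\leq\|B\|_\gamma^2$) yields the uniform bound on $\mathbb{E}\sup_t\|X_n(t)\|_H^p$ claimed in the conclusion. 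The growth condition (A3) then gives a uniform bound on $A(\cdot,X_n(\cdot))$ in $L^{\alpha/(\alpha-1)}([0,T]\times\Omega;V^*)$, and Assumption~\ref{Assumption2} bounds $B(X_n)$ in $L^2([0,T]\times\Omega;L_2(U;H))$.

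\textbf{Step 3: Passing to the limit.} By reflexivity there exist a subsequence (still indexed by $n$) and limits $X,Y,Z$ with
\begin{equation*}
X_n\rightharpoonup X \text{ in } L^\alpha([0,T]\times\Omega;V),\quad A(\cdot,X_n)\rightharpoonup Y \text{ in } L^{\alpha/(\alpha-1)}([0,T]\times\Omega;V^*),
\end{equation*}
\begin{equation*}
B(X_n)\rightharpoonup Z \text{ in } L^2([0,T]\times\Omega;L_2(U;H)),\quad X_n(T)\rightharpoonup \xi \text{ weakly in } L^2(\Omega;H).
\end{equation*}
A standard argument shows that $X$ has an $H$-continuous version satisfying $X(t)=x+\int_0^t Y(s)\,ds+\int_0^t Z(s)\,dW(s)$ with $X(T)=\xi$.

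\textbf{Step 4: Identification via the local monotonicity trick.} The crux is to show $Y(\cdot)=A(\cdot,X(\cdot))$ and $Z(\cdot)=B(X(\cdot))$, $dt\otimes\mathbb{P}$-a.e. For any $\phi\in L^\alpha([0,T]\times\Omega;V)\cap L^\infty([0,T]\times\Omega;H)$ with $\rho(\phi)$ locally integrable, introduce the exponential weight $e^{-\int_0^t(K+\rho(\phi(s)))\,ds}$ and apply It\^o's formula to $e^{-\int_0^t(K+\rho(\phi(s)))\,ds}\|X_n(t)-\phi(t)\|_H^2$. Using (A2) and taking $\liminf_{n\to\infty}$ (which is where one also needs weak lower semicontinuity of the norm), one obtains
\begin{equation*}
\mathbb{E}\!\int_0^T e^{-\int_0^t(K+\rho(\phi))}\bigl(2{}_{V^*}\langle Y-A(\cdot,\phi),X-\phi\rangle_V+\|Z-B(\phi)\|_2^2\bigr)\,dt\leq 0.
\end{equation*}
Choosing $\phi=X$ gives $Z=B(X)$, and then choosing $\phi=X-\varepsilon\psi v$ for $v\in V$, $\psi$ bounded and adapted, dividing by $\varepsilon$, letting $\varepsilon\to 0$ and invoking hemicontinuity (A1) identifies $Y=A(\cdot,X)$. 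This monotonicity trick is the main obstacle, because the presence of $\rho(\phi)$ in (A2) forces the exponential weight and one must verify that the test functions generated in this way are rich enough; this is exactly the technical heart of \cite[Thm.~1.1]{LR10}.

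\textbf{Step 5: Uniqueness.} For two solutions $X,\tilde X$ starting from the same $x$, apply It\^o to $e^{-\int_0^t(K+\rho(\tilde X(s)))\,ds}\|X(t)-\tilde X(t)\|_H^2$, use (A2) and Assumption~\ref{Assumption2} to kill the drift terms and obtain a supermartingale that is $0$ at time $0$ and nonnegative, forcing $X=\tilde X$. Localising by $\tau_R=\inf\{t:\|\tilde X(t)\|_V^\alpha+\|\tilde X(t)\|_H^\beta\geq R\}$ handles the non-integrability of $\rho(\tilde X)$; $\tau_R\to T$ by Step~2.
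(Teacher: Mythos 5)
The paper does not give its own proof of this lemma; it simply states the result and cites \cite[Theorem 1.1]{LR10}. Your sketch correctly outlines the Galerkin plus monotonicity-trick machinery from that reference, including the essential use of the exponential weight $e^{-\int_0^t(K+\rho(\phi(s)))\,ds}$ forced by the local (rather than global) monotonicity, so the reconstruction is faithful in outline.

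One point worth tightening: in Step 5 your stopping time $\tau_R=\inf\{t:\|\tilde X(t)\|_V^\alpha+\|\tilde X(t)\|_H^\beta\geq R\}$ is not well defined, because the solution is continuous only as an $H$-valued process and $\|\tilde X(t)\|_V$ need not be finite for every $t$ (indeed $\tilde X(t)\in V$ only for $dt$-a.e.\ $t$). The localization should go through the running integral, e.g.\ $\tau_R=\inf\{t:\int_0^t(1+\|\tilde X(s)\|_V^\alpha)(1+\|\tilde X(s)\|_H^\beta)\,ds\geq R\}$; this is a genuine stopping time, it satisfies $\tau_R\nearrow T$ a.s.\ by the a priori bounds from Step 2 together with the growth bound on $\rho$, and it renders the exponential weight bounded on $[0,\tau_R]$. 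The analogous caveat applies in Step 4 when you take $\phi=X$: one needs $\int_0^T\rho(X(s))\,ds<\infty$ a.s., which does follow from $X\in L^\alpha([0,T];V)$ a.s.\ and $\sup_{t\in[0,T]}\|X(t)\|_H<\infty$ a.s., but the weight is to be read through the running integral rather than via pointwise values of $\|X(t)\|_V$.
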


For $\varepsilon>0$, in this paper, we aim to study the probabilistic asymptotic behavior for small time process $X(\varepsilon t)$ as $\varepsilon\rightarrow0$.

Define a functional $I(g)$ on $C([0,T];H)$ by
\begin{eqnarray}
I(g)=\inf_{h\in\Gamma_g}\left\{\frac{1}{2}\int_0^T|\dot{h}|_U^2dt\right\},\label{Ig}
\end{eqnarray}
where
\begin{eqnarray*}
\Gamma_g=\Big\{\!\!\!\!\!\!\!\!&&h\in C([0,T];H):h(\cdot) ~\text{is  absolutely continuous and such that}
\\\!\!\!\!\!\!\!\!&&g(t)=x+\int_0^tB(g(s))\dot{h}(s)ds,~t\in[0,T]\Big\}.
\end{eqnarray*}

\medskip
Now we state the main result of this paper.
\begin{thm}\label{main result}
Suppose that the conditions in Assumption \ref{Assumption1} and Assumption \ref{Assumption2}  hold. Let $\mu^{\varepsilon}$ be the law of $X(\varepsilon\cdot)$ on $C([0,T],H)$, then $\mu^{\varepsilon}$ satisfies the LDP with the rate function $I(\cdot)$ given by \eref{Ig}, i.e.,

\begin{enumerate}[(i)]
  \item  For any closed subset $F\subset C([0,T];H)$,
  $$\limsup_{\varepsilon\rightarrow0}\varepsilon\log\mu^{\varepsilon}(F)\leq-\inf_{g\in F}I(g),$$
  \item  For any open subset $G\subset C([0,T];H)$,
  $$\liminf_{\varepsilon\rightarrow0}\varepsilon\log\mu^{\varepsilon}(G)\geq-\inf_{g\in G}I(g).$$
 \end{enumerate}
\end{thm}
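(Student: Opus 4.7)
The plan is to follow Zhang's exponential equivalence strategy \cite{Zh00}. By a time-rescaling $s=\varepsilon t$ together with Brownian scaling, the law of $X(\varepsilon\cdot)$ on $C([0,T];H)$ coincides with the law of the solution $X^\varepsilon$ of
\[
dX^\varepsilon(t)=\varepsilon A(\varepsilon t,X^\varepsilon(t))\,dt+\sqrt{\varepsilon}\,B(X^\varepsilon(t))\,d\widetilde W(t),\quad X^\varepsilon(0)=x,
\]
where $\widetilde W(t):=\varepsilon^{-1/2}W(\varepsilon t)$ is again a cylindrical Wiener process on $U$. Alongside this, introduce the zero-drift companion
\[
dY^\varepsilon(t)=\sqrt{\varepsilon}\,B(Y^\varepsilon(t))\,d\widetilde W(t),\quad Y^\varepsilon(0)=x.
\]
The small-noise LDP for $Y^\varepsilon$ on $C([0,T];H)$ with rate function $I$ in \eref{Ig} is, under Assumption \ref{Assumption2}, classical (e.g.\ via Schilder's theorem combined with continuity of the It\^o map, or the Budhiraja--Dupuis variational representation). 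By the exponential equivalence principle it then suffices to show that for every $\delta>0$
\[
\limsup_{\varepsilon\to 0}\varepsilon\log\mathbb{P}\Bigl(\sup_{t\in[0,T]}\|X^\varepsilon(t)-Y^\varepsilon(t)\|_H>\delta\Bigr)=-\infty.
\]

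To this end, set $Z^\varepsilon:=X^\varepsilon-Y^\varepsilon$ and apply It\^o's formula to $\|Z^\varepsilon(t)\|_H^2$ in the variational framework. Since $Y^\varepsilon$ has zero drift, the deterministic part of the differential decomposes as
\[
2\varepsilon\,{}_{V^*}\langle A(\varepsilon s,X^\varepsilon)-A(\varepsilon s,Y^\varepsilon),Z^\varepsilon\rangle_V+2\varepsilon\,{}_{V^*}\langle A(\varepsilon s,Y^\varepsilon),Z^\varepsilon\rangle_V,
\]
whose first summand is controlled via (A2) by $-\varepsilon\eta\|Z^\varepsilon\|_V^\alpha+\varepsilon(K+\rho(Y^\varepsilon))\|Z^\varepsilon\|_H^2$, and whose second summand is absorbed, up to $\varepsilon\eta\|Z^\varepsilon\|_V^\alpha$, into a remainder dominated via Young's inequality and (A3) by $C\varepsilon\|A(\varepsilon s,Y^\varepsilon)\|_{V^*}^{\alpha/(\alpha-1)}$. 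The It\^o correction is bounded by $C\varepsilon\|Z^\varepsilon\|_H^2$ thanks to Assumption \ref{Assumption2}, and the residual stochastic integral is treated by an exponential martingale (Novikov-type) estimate. A Gronwall-type argument, applied on the good event
\[
\mathcal{G}_R^\varepsilon:=\Bigl\{\sup_{t\in[0,T]}\|Y^\varepsilon(t)\|_V\le R\Bigr\}\cap\Bigl\{\sup_{t\in[0,T]}\|Y^\varepsilon(t)\|_H\le R\Bigr\},
\]
then reduces the claim to (a) sub-Gaussian tail estimates of the form $\mathbb{P}((\mathcal{G}_R^\varepsilon)^c)\le\exp(-cR^2/\varepsilon)$, and (b) the corresponding exponential equivalence on $\mathcal{G}_R^\varepsilon$, where $\rho(Y^\varepsilon)$ and $\|A(\varepsilon s,Y^\varepsilon)\|_{V^*}^{\alpha/(\alpha-1)}$ are now deterministically bounded in $R$.

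The hard part is the sub-Gaussian $V$-norm tail in (a): in the quasilinear setting $V$ is typically a non-Hilbert Sobolev space and the ordinary $H$-valued It\^o formula does not suffice. This is precisely where the 2-smooth Banach structure of $V$ enters. Assumption \ref{Assumption2} guarantees $B(Y^\varepsilon)\in\gamma(U,V)$ almost surely, so $\sqrt{\varepsilon}\int_0^\cdot B(Y^\varepsilon)\,d\widetilde W$ is a well-defined $V$-valued process, and Seidler's BDG inequality \cite{Se10} applies with the sharp constant $\sqrt{p}$. Combined with the linear growth $\|B(v)\|_\gamma^2\le C(1+\|v\|_V^2)$ and a $p$-th moment Gronwall argument, this yields
\[
\mathbb{E}\sup_{t\in[0,T]}\|Y^\varepsilon(t)\|_V^p\le C_{x,T}\,(Cp\varepsilon)^{p/2}\qquad\text{for every }p\ge 2.
\]
Markov's inequality followed by optimisation in $p$ then produces the Gaussian bound $\mathbb{P}(\sup_t\|Y^\varepsilon\|_V>R)\le\exp(-cR^2/\varepsilon)$, and a parallel but easier argument using the standard Hilbert-space BDG controls the $H$-norm. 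Inserting these tail bounds into step (b) closes the exponential equivalence, and Theorem \ref{main result} follows from the LDP for $Y^\varepsilon$ and the equivalence principle.
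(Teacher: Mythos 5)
Your high-level strategy — rescaling, zero-drift companion process, small-noise LDP for the companion, and exponential equivalence — is exactly the paper's. But there is a genuine gap in your treatment of the $V$-norm tail for $Y^\varepsilon$: the initial datum $x$ lies only in $H$, not in $V$. Your zero-drift process $Y^\varepsilon$ starts at $Y^\varepsilon(0)=x$, so in general $Y^\varepsilon$ is not a $V$-valued process, Seidler's inequality cannot be applied to the $V$-valued path $\int_0^\cdot B(Y^\varepsilon)\,dW$ in the way you describe, and the asserted bound $\mathbb{E}\sup_t\|Y^\varepsilon(t)\|_V^p\le C_{x,T}(Cp\varepsilon)^{p/2}$ is simply not available. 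Even if $x$ happened to lie in $V$, the correct bound has an additive $\|x\|_V$-term (something of the shape $\bigl(\|x\|_V^2+Cp\varepsilon\bigr)^{p/2}e^{Cp\varepsilon}$), which does not vanish as $\varepsilon\to 0$, so the sub-Gaussian tail you claim is both inaccessible and, incidentally, stronger than needed. This is not a cosmetic issue: the $V$-norm bound on the companion process is what makes the remainder terms $\rho(\cdot)$ and $\|A(\varepsilon s,\cdot)\|_{V^*}^{\alpha/(\alpha-1)}$ deterministically controlled on the good event, so without it the whole Gronwall step collapses.

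The paper resolves this with a density approximation that is entirely absent from your sketch: choose $x_n\in V$ with $\|x_n-x\|_H\to 0$, and let $X_n^\varepsilon,Y_n^\varepsilon$ solve the two equations with initial datum $x_n$. Since $x_n\in V$ and $V$ is 2-smooth, $Y_n^\varepsilon$ is a genuine $V$-valued process and the Seidler BDG argument gives, for each fixed $n$, an exponential tail bound for $\sup_t\|Y_n^\varepsilon\|_V^2$ (Lemma 3.2). Combined with an energy-ball tail for $X_n^\varepsilon$ (Lemma 3.1), this yields exponential equivalence of $X_n^\varepsilon$ and $Y_n^\varepsilon$ for each fixed $n$ (Lemma 3.5). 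Two further lemmas (3.3 and 3.4) prove exponential $H$-norm closeness of $X_n^\varepsilon$ to $X^\varepsilon$ and of $Y_n^\varepsilon$ to $Y^\varepsilon$, \emph{uniformly in} $\varepsilon\in(0,1]$, as $n\to\infty$; these use the local monotonicity (A2) for the $X$-pair and the $H$-Lipschitz condition on $B$ for the $Y$-pair, plus Davis's sharp-constant BDG with $q=2/\varepsilon$. A triangle inequality over the three pieces then gives \eref{Z}. You need to insert this four-lemma approximation scaffold; the rest of your outline (choice of decomposition of the drift term, absorbing $\varepsilon\eta\|Z^\varepsilon\|_V^\alpha$, the role of 2-smoothness and Seidler's sharp constant) is in line with the paper's proof.
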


\begin{rem}\label{comment}
 (1) In section \ref{example} below, we will apply Theorem \ref{main result} to concrete examples
of SPDE models as applications. In particular, this covers the results in \cite{XZ09,LS17}, where the small time LDP for stochastic 2D Navier-Stokes equation and stochastic Ladyzhenskaya model was studied respectively.

 (2) Furthermore, Theorem \ref{main result} can also be applied to study the small time LDP for many other SPDEs,
such as stochastic Burgers type equation, stochastic porous media equation, stochastic $p$-Laplace equation, stochastic 2D Boussinesq equations, stochastic 2D magneto-hydrodynamic equations, stochastic 2D magnetic B\'{e}nard problem, stochastic 3D Leray-$\alpha$ model, stochastic shell models of turbulence and stochastic power law fluid equation, which seem to not have been established in the literature before.
\end{rem}

\section{Proof of main result} \label{Sec Proof of Thm1}

In this section, we will give the proof of Theorem \ref{main result}, which is mainly based on the  exponential equivalence arguments.

More precisely, for $\varepsilon>0$, by the scaling property of the Wiener process, it is easy to see that the small time process $X(\varepsilon\cdot)$ coincides in law with the solution of the following stochastic evolution equation:
\begin{eqnarray}\label{small equation}
X^{\varepsilon}(t)=x+\varepsilon\int_0^tA(\varepsilon s,X^{\varepsilon}(s))ds+\sqrt{\varepsilon}\int_0^tB(X^{\varepsilon}(s))dW(s).
\end{eqnarray}
Now, let $Y^{\varepsilon}(\cdot)$ be the solution of the following stochastic differential equation:
\begin{eqnarray}\label{Y}
Y^{\varepsilon}(t)=x+\sqrt{\varepsilon}\int_0^tB(Y^{\varepsilon}(s))dW(s)
\end{eqnarray}
and $\nu^{\varepsilon}$ be the law of $Y^{\varepsilon}(\cdot)$ on the $C([0,T];H)$. Then, applying the weak
convergence approach developed by Budhiraja and Dupuis \cite{BD00}, it easy to get that $\nu^{\varepsilon}$ satisfies the LDP with rate function $I(\cdot)$ given by \eref{Ig} (see, e.g. \cite{L,LTZ}). Therefore, our task now is to show that the two families of probability
measures $\mu^{\varepsilon}$ and $\nu^{\varepsilon}$ are exponentially equivalent, that is, for any $\delta>0$,
\begin{eqnarray}\label{Z}
\lim_{\varepsilon\rightarrow0}\varepsilon\log\mathbb{P}\left(\sup_{0\leq t\leq T}\|X^{\varepsilon}(t)-Y^{\varepsilon}(t)\|_{H}^2>\delta\right)=-\infty.
\end{eqnarray}
Then Theorem \ref{main result} follows from the fact that if one of the two exponentially equivalent
families satisfies the LDP, so does the other (see e.g. \cite[Theorem 4.2.13]{DZ}).

We begin the proof with the following lemma which provides an estimate of the probability that the solution
of \eref{small equation} leaves an energy ball.

 Set
$$\left(|X^\varepsilon|_{H,V}(T)\right)^p:=\sup_{0\leq t\leq T}\|X^{\varepsilon}(t)\|_H^p+\varepsilon\int_0^T\|X^{\varepsilon}(t)\|_H^{p-2}
\|X^{\varepsilon}(t)\|_V^{\alpha}dt.$$
Then, we claim that

\begin{lem} \label{3.1} For any $p\geq2$,
\begin{align}
\lim_{M\rightarrow\infty}\sup_{0<\varepsilon\leq1}\varepsilon\log\mathbb{P}\left(\left(|X^\varepsilon|_{H,V}(T)\right)^p>M\right)=-\infty.
\end{align}
\end{lem}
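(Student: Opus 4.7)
The plan is to prove the exponential tightness of $(|X^\varepsilon|_{H,V}(T))^p$ by deriving two complementary energy estimates via It\^o's formula and combining them through a stopping-time decomposition. The main obstacle is that a direct application of It\^o's formula to $\|X^\varepsilon\|_H^p$ yields a stochastic integral whose quadratic variation scales like $\varepsilon(\sup_t\|X^\varepsilon\|_H^p)^2$; a naive use of the exponential martingale inequality would then yield only an $M$-independent bound of the form $\exp(-c/\varepsilon)$, which is insufficient for the desired decay in $M$.

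To overcome this, I would first apply It\^o's formula to $F(\|X^\varepsilon(t)\|_H^2)$ with $F(r)=\log(1+r)$. Thanks to the coercivity inequality from Remark~\ref{c2.1}, the linear growth $\|B(v)\|_2^2\le C(1+\|v\|_H^2)$ (implied by the Lipschitz part of Assumption~\ref{Assumption2}), and the sign of the $F''$-correction, one arrives at
\[
\log(1+\|X^\varepsilon(t)\|_H^2)\le \log(1+\|x\|_H^2)+2C\varepsilon t+\widetilde M(t),
\]
where $\widetilde M$ is a continuous martingale whose quadratic variation is pointwise bounded by $4C\varepsilon T$. The exponential martingale inequality then gives, for $M_H$ large,
\[
\mathbb{P}\Big(\sup_{t\le T}\|X^\varepsilon(t)\|_H^p> M_H\Big)\le \exp\!\Big(-\frac{(\log M_H)^2}{C_1\varepsilon T}\Big).
\]

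In parallel, I would apply It\^o's formula to $\|X^\varepsilon(t)\|_H^p$, use coercivity and Young's inequality to absorb the drift, and then a pathwise Gronwall argument (valid uniformly for $\varepsilon\in(0,1]$) to obtain
\[
(|X^\varepsilon|_{H,V}(T))^p\le K\Big(1+\|x\|_H^p+\sup_{t\le T}|M^\varepsilon(t)|\Big),
\]
where $M^\varepsilon(t)=p\sqrt{\varepsilon}\int_0^t\|X^\varepsilon\|_H^{p-2}\langle X^\varepsilon,B(X^\varepsilon)dW\rangle$ and $K$ depends only on $p,T,\eta,C$.

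To conclude, I would introduce the stopping time $\tau:=\inf\{t:\|X^\varepsilon(t)\|_H^p>\sqrt{M}\}$ and observe that on $[0,\tau]$ the quadratic variation of $M^\varepsilon$ is bounded by $C'_p\varepsilon TM$, which is now \emph{linear} in $M$. The exponential martingale inequality applied to the localized martingale yields
\[
\mathbb{P}\big((|X^\varepsilon|_{H,V}(T))^p> M,\,\tau>T\big)\le 2\exp\!\Big(-\frac{cM}{\varepsilon T}\Big),
\]
while $\{\tau\le T\}\subset\{\sup_t\|X^\varepsilon\|_H^p>\sqrt{M}\}$ is controlled by the log-It\^o bound with $M_H=\sqrt{M}$. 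Summing the two contributions and taking $\varepsilon\log$ produces
\[
\sup_{\varepsilon\in(0,1]}\varepsilon\log\mathbb{P}\big((|X^\varepsilon|_{H,V}(T))^p> M\big)\le -\min\!\Big(\frac{cM}{T},\frac{(\log M)^2}{4C_1T}\Big)+O(1),
\]
which tends to $-\infty$ as $M\to\infty$. The decisive trick, resolving the main obstacle, is the log-It\^o transformation: it trades the polynomial dependence of the martingale's quadratic variation on $\|X^\varepsilon\|_H$ for a uniform $L^\infty$ bound, at the cost of only a Gaussian tail in $\log M_H$.
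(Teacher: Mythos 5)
Your proof is correct, and it takes a genuinely different route from the paper's. The paper's argument stays entirely at the level of $L^q$-moments: it applies It\^o's formula to $\|X^\varepsilon\|_H^p$, controls the resulting stochastic integral via Davis's martingale inequality \eqref{martingale inequality} with the sharp constant $Cq^{1/2}$, runs Gronwall on $\bigl(\mathbb{E}(|X^\varepsilon|_{H,V}(T))^{pq}\bigr)^{2/q}$, and then invokes Chebyshev with the crucial choice $q=2/\varepsilon$; the $q^{1/2}$-dependence is precisely what keeps the $q\varepsilon$ factor bounded uniformly in $\varepsilon\in(0,1]$, producing a decay of order $-2\log M$. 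Your argument is instead a classical exponential-tightness scheme built around the exponential martingale inequality together with a stopping-time localization: the $\log(1+\|\cdot\|_H^2)$-transform (not used anywhere in the paper) forces the transformed martingale's quadratic variation to be uniformly bounded by $O(\varepsilon T)$, giving a Gaussian tail in $\log M_H$ for $\sup_t\|X^\varepsilon(t)\|_H^p$; the pathwise Gronwall then reduces the tail of $(|X^\varepsilon|_{H,V}(T))^p$ to that of $\sup_t|M^\varepsilon(t)|$; and stopping at $\tau$ makes $\langle M^\varepsilon(\cdot\wedge\tau)\rangle_T$ linear in $M$, so the exponential inequality yields $\exp(-cM/(\varepsilon T))$. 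The resulting combined bound $-\min\bigl(cM/T,(\log M)^2/(4C_1T)\bigr)$ decays faster in $M$ than the paper's $-2\log M$. The trade-off is economy and reuse: the paper's $q\sim 1/\varepsilon$ Chebyshev device is shorter and is repeated essentially verbatim in the proofs of Lemmas~\ref{3.2}--\ref{3.5}, whereas your scheme needs the extra log-It\^o step to break the circular dependence of the quadratic variation on the $H$-sup before localization can help.
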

\begin{proof}
According to It\^{o}'s formula (cf. \cite[Theorem 4.2.5]{LR15}) and Remark \ref{c2.1}, we have
\begin{eqnarray*}\label{ItoFormu 000}
\|X^{\varepsilon}(t)\|_H^{p}=\!\!\!\!\!\!\!\!&&\|x\|_H^{p}+\frac{p(p-2)}{2}\varepsilon\int_{0} ^{t}\|X^{\vare}(s)\|_H^{p-4}\|B(X^{\vare}(s))^*X^{\vare}(s) \|_{U}^2ds \nonumber \\
 \!\!\!\!\!\!\!\!&& + \frac{p}{2}\varepsilon\int_{0} ^{t}\|X^{\vare}(s)\|_H^{p-2}(2_{V^*}\langle A(\vare s,X^{\vare}(s)),X^{\vare}(s)\rangle_V+\|B(X^{\vare}(s))\|_2^2)ds
 \\ \!\!\!\!\!\!\!\!&& +p\sqrt{\varepsilon}\int_{0} ^{t}\|X^{\vare}(s)\|_H^{p-2}\langle X^{\vare}(s),B(X^{\vare}(s))dW(s)\rangle
 \\\leq\!\!\!\!\!\!\!\!&&\|x\|_H^{p}-\frac{p\eta}{4}\varepsilon\int_{0} ^{t}\|X^{\vare}(s)\|_H^{p-2}\|X^{\vare}(s) \|_{V}^{\alpha}ds
 +C\varepsilon\int_0^t\|X^{\vare}(s)\|_H^{p}ds
 \\ \!\!\!\!\!\!\!\!&& +p\sqrt{\varepsilon}\int_{0} ^{t}\|X^{\vare}(s)\|_H^{p-2}\langle X^{\vare}(s),B(X^{\vare}(s))dW(s)\rangle.
\end{eqnarray*}
Then it is easy to get
\begin{eqnarray*}
\|X^{\varepsilon}(t)\|_H^{p}+\varepsilon\int_{0} ^{t}\|X^{\vare}(s)\|_H^{p-2}\|X^{\vare}(s)\|_{V}^{\alpha}ds
\leq\!\!\!\!\!\!\!\!&& C_p\left(1+\|x\|_H^{p}\right)+C_p\varepsilon\int_0^t\|X^{\vare}(s)\|_H^{p}ds
 \\\!\!\!\!\!\!\!\!&& +C_p\sqrt{\varepsilon}\int_{0} ^{t}\|X^{\vare}(s)\|_H^{p-2}\langle X^{\vare}(s),B(X^{\vare}(s))dW(s)\rangle,
\end{eqnarray*}
which implies that for any $q\geq2$
\begin{eqnarray*}
\left(\mathbb{E}\left(|X^\varepsilon|_{H,V}(T)\right)^{p})^q\right)^{1/q}
\leq\!\!\!\!\!\!\!\!&& C_p\left(1+\|x\|_H^{p}\right)+C_p\varepsilon\left(\mathbb{E}\left(\int_0^T\left(|X^\varepsilon|_{H,V}(t)\right)^{p}dt\right)^q\right)^{1/q}
 \\\!\!\!\!\!\!\!\!&& +C_p\sqrt{\varepsilon}\left(\mathbb{E}\sup_{0\leq t\leq T}\left|\int_{0} ^{t}\|X^{\vare}(s)\|_H^{p-2}\langle X^{\vare}(s),B(X^{\vare}(s))dW(s)\rangle\right|^q\right)^{1/q}.
\end{eqnarray*}
To estimate the stochastic integral term, we will use the following martingale inequality  from \cite{Da}
that there exists a universal constant $C$ such that, for any $q\geq2$ and for any continuous martingale $M_t$
with $M_0=0$, one has
\begin{eqnarray}
(\mathbb{E}(|M_t^*|^q))^{1/q}\leq Cq^{1/2}(\mathbb{E}\langle M\rangle_t^{q/2})^{1/q},\label{martingale inequality}
\end{eqnarray}
where $M_t^*=\sup_{0\leq s\leq t}|M_s|$.

Hence, we can get
\begin{eqnarray*}
&&\sqrt{\varepsilon}\left(\mathbb{E}\sup_{0\leq t\leq T}\left|\int_{0} ^{t}\|X^{\vare}(s)\|_H^{p-2}\langle X^{\vare}(s),B(X^{\vare}(s))dW(s)\rangle\right|^{q}\right)^{1/q}
\\\leq\!\!\!\!\!\!\!\!&&C\sqrt{q\varepsilon}\left(\mathbb{E}\left(\int_{0} ^{T}\|X^{\vare}(t)\|_H^{2p-2}\|B(X^{\vare}(t))\|_2^2dt\right)^{q/2}\right)^{1/q}\\
\leq\!\!\!\!\!\!\!\!&&C\sqrt{q\varepsilon}\left(\mathbb{E}\left(\int_{0} ^{T}\|X^{\vare}(t)\|_H^{2p-2}(1+\|X^{\vare}(t)\|_H^2)dt\right)^{q/2}\right)^{1/q}
\\\leq\!\!\!\!\!\!\!\!&&C\sqrt{q\varepsilon}\left(\int_{0} ^{T}1+\left(\mathbb{E}\|X^{\vare}(t)\|_H^{pq}\right)^{2/q}dt\right)^{1/2}.
\end{eqnarray*}
Therefore, combining the above estimates yields
\begin{eqnarray*}
\left(\mathbb{E}\left(|X^\varepsilon|_{H,V}(T)\right)^{p})^q\right)^{2/q}
\leq\!\!\!\!\!\!\!\!&& C_p\left(1+\|x\|_H^{p}\right)^2+C_p\varepsilon^2\left(\mathbb{E}\left(\int_0^T\left(|X^\varepsilon|_{H,V}(t)\right)^{p}dt\right)^q\right)^{2/q}
 \\\!\!\!\!\!\!\!\!&& +C_pq\varepsilon\left(\int_{0} ^{T}1+\left(\mathbb{E}\|X^{\vare}(t)\|_H^{pq}\right)^{2/q}dt\right)
\\ \leq\!\!\!\!\!\!\!\!&& C_p\left(1+\|x\|_H^{p}\right)^2+C_p\varepsilon^2\left(\int_0^T\left(\mathbb{E}(|X^\varepsilon|_{H,V}(t))^{pq}\right)^{2/q}dt\right)
 \\\!\!\!\!\!\!\!\!&& +C_pq\varepsilon T+C_pq\varepsilon\left(\int_0^T\left(\mathbb{E}(|X^\varepsilon|_{H,V}(t))^{pq}\right)^{2/q}dt\right),
\end{eqnarray*}
where the second inequality is due to Minkowski's inequality.

Applying Gronwall's lemma we obtain that
\begin{eqnarray*}
\left(\mathbb{E}\left(|X^\varepsilon|_{H,V}(T)\right)^{p})^q\right)^{2/q}
\leq\!\!\!\!\!\!\!\!&&\left( C_p\left(1+\|x\|_H^{p}\right)^2+C_pq\varepsilon T\right)\cdot\exp{(C_p\varepsilon^2+C_pq\varepsilon)}.
\end{eqnarray*}

Using Chebyshev's inequality, for any $M>0$, we have
\begin{eqnarray*}
\mathbb{P}\left(\left(|X^\varepsilon|_{H,V}(T)\right)^p>M\right)\leq \frac{\mathbb{E}\left(|X^\varepsilon|_{H,V}(T)\right)^{p})^q}{M^{q}}.
\end{eqnarray*}

Taking $q=2/{\varepsilon}$, we get
\begin{eqnarray*}
&&\varepsilon\log\mathbb{P}\left(\left(|X^\varepsilon|_{H,V}(T)\right)^p>M\right)
\nonumber\\\leq\!\!\!\!\!\!\!\!&&-2\log M+\log\left(\mathbb{E}\left(|X^\varepsilon|_{H,V}(T)\right)^{p})^q\right)^{2/q}\nonumber\\
\leq\!\!\!\!\!\!\!\!&&-2\log M+C_p\varepsilon^2+2C_p+\log\left( C_p\left(1+\|x\|_H^{p}\right)^2+2C_pT\right).
\end{eqnarray*}
Then, it is easy to see
\begin{eqnarray}\label{2.2}
\sup_{0<\varepsilon\leq1}\varepsilon\log\mathbb{P}\left(\left(|X^\varepsilon|_{H,V}(T)\right)^p>M\right)
\leq\!\!\!\!\!\!\!\!&&-2\log M+3C_p
\nonumber\\\!\!\!\!\!\!\!\!&&+\log\left( C_p\left(1+\|x\|_H^{p}\right)^2+2C_p T\right).
\end{eqnarray}
Let $M\rightarrow\infty$ on both sides of \eref{2.2}, we complete the proof.
\end{proof}

Since $V$ is dense in $H$, there exists a sequence $\{x_n\}\subset V$ such that
$$\lim_{n\rightarrow+\infty}\|x_n-x\|_H=0.$$
Let $X_n^{\varepsilon}$
be the solution of \eref{main equation} with the initial value $x_n$. From the proof of Lemma \ref{3.1}, it follows that
\begin{align}\label{2.3}
\lim_{M\rightarrow\infty}\sup_n\sup_{0<\varepsilon\leq1}\varepsilon\log\mathbb{P}\left(\left(|X_n^\varepsilon|_{H,V}(T)\right)^p>M\right)=-\infty.
\end{align}

Let $Y_n^{\varepsilon}$ be the solution of \eref{Y} with the initial value $x_n$, i.e.
\begin{eqnarray}
Y_n^{\varepsilon}(t)=x_n+\sqrt{\varepsilon}\int_0^tB(Y_n^{\varepsilon}(s))dW(s).\label{Yn}
\end{eqnarray}
Then we can get the following estimate.
\begin{lem} \label{3.2} For any $n\in\mathbb{Z}^+$,
\begin{align*}
\lim_{M\rightarrow\infty}\sup_{0<\varepsilon\leq1}\varepsilon\log\mathbb{P}\left(\sup_{0\leq t\leq T}\|Y_n^\varepsilon(t)\|_{V}^2>M\right)=-\infty.
\end{align*}
\end{lem}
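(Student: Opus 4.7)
The plan is to mimic the structure of Lemma \ref{3.1} but now work directly in the $V$-norm, taking advantage of the fact that the drift in \eref{Yn} vanishes, so one does not need It\^o's formula for $\|\cdot\|_V^p$ at all---a single application of the BDG inequality to the pure stochastic integral term suffices. Since $V$ is assumed to be a 2-smooth Banach space, I will invoke the sharp Seidler BDG inequality for Banach-space-valued stochastic integrals, which gives for every $q\geq 2$ a constant $C$ (independent of $q$) such that
\[
\Bigl(\mathbb{E}\sup_{0\leq t\leq T}\Bigl\|\int_0^t B(Y_n^\varepsilon(s))\,dW(s)\Bigr\|_V^{q}\Bigr)^{1/q}\leq C\sqrt{q}\,\Bigl(\mathbb{E}\Bigl(\int_0^T\|B(Y_n^\varepsilon(s))\|_\gamma^2\,ds\Bigr)^{q/2}\Bigr)^{1/q}.
\]
The $\sqrt{q}$ is what will ultimately allow the choice $q=2/\varepsilon$ and yield an $\varepsilon$-uniform bound.

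Starting from \eref{Yn}, taking $V$-norms and applying the triangle inequality then the $L^q$-BDG inequality above, and using the linear growth bound $\|B(v)\|_\gamma^2\leq C(1+\|v\|_V^2)$ from Assumption \ref{Assumption2}, I obtain
\[
\Bigl(\mathbb{E}\sup_{0\leq t\leq T}\|Y_n^\varepsilon(t)\|_V^{q}\Bigr)^{1/q}
\leq \|x_n\|_V+C\sqrt{q\varepsilon}\,\Bigl(\mathbb{E}\Bigl(\int_0^T(1+\|Y_n^\varepsilon(s)\|_V^2)\,ds\Bigr)^{q/2}\Bigr)^{1/q}.
\]
Squaring, applying Minkowski's inequality exactly as in the proof of Lemma \ref{3.1}, and using $(1+a)^{q/2}\leq 2^{q/2-1}(1+a^{q/2})$, I arrive at an inequality of the form
\[
\phi_n^\varepsilon(T)\leq 2\|x_n\|_V^2+C q\varepsilon\,T+C q\varepsilon\int_0^T\phi_n^\varepsilon(s)\,ds,
\qquad
\phi_n^\varepsilon(t):=\Bigl(\mathbb{E}\sup_{0\leq s\leq t}\|Y_n^\varepsilon(s)\|_V^{q}\Bigr)^{2/q}.
\]
Gronwall's lemma then yields $\phi_n^\varepsilon(T)\leq(2\|x_n\|_V^2+Cq\varepsilon T)\exp(Cq\varepsilon T)$.

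To finish, I specialise to $q=2/\varepsilon$, which makes the factor $q\varepsilon$ equal to the constant $2$; hence $\phi_n^\varepsilon(T)$ is bounded by a constant $C_n$ depending only on $\|x_n\|_V$ and $T$, uniformly in $\varepsilon\in(0,1]$. Chebyshev's inequality gives
\[
\mathbb{P}\Bigl(\sup_{0\leq t\leq T}\|Y_n^\varepsilon(t)\|_V^2>M\Bigr)
\leq \frac{\mathbb{E}\sup_{0\leq t\leq T}\|Y_n^\varepsilon(t)\|_V^{q}}{M^{q/2}},
\]
so that $\varepsilon\log\mathbb{P}(\cdot)\leq \varepsilon\log\mathbb{E}\sup\|Y_n^\varepsilon\|_V^q-\varepsilon(q/2)\log M=\log\phi_n^\varepsilon(T)-\log M\leq\log C_n-\log M$, and letting $M\to\infty$ concludes the proof.

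The main obstacle is the stochastic-integral estimate in the Banach space $V$: classical It\^o/BDG arguments in a Hilbert space are unavailable, and an ad hoc computation would lose control of the explicit $q$-dependence needed to take $q\sim 1/\varepsilon$. This is precisely why the 2-smooth framework and Seidler's inequality with sharp constant $\sqrt{q}$ are essential; once this tool is in hand, everything else is a direct replay of the scheme in Lemma \ref{3.1}.
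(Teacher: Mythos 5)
Your proposal is correct and follows essentially the same route as the paper's proof: apply Seidler's BDG inequality in the 2-smooth space $V$ with the sharp $\sqrt{q}$ constant, bound $\|B\|_\gamma^2$ by $C(1+\|\cdot\|_V^2)$, use Minkowski and Gronwall to get a bound of the form $(2\|x_n\|_V^2 + Cq\varepsilon)e^{Cq\varepsilon}$, then choose $q$ proportional to $1/\varepsilon$ and conclude via Chebyshev. The only difference is a cosmetic reparametrisation (you use exponent $q$ with $q=2/\varepsilon$ where the paper uses exponent $2q$ with $q=1/\varepsilon$).
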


\begin{proof}

To estimate the stochastic integral term in the Banach space $V$, here we need to use the BDG type inequality for 2-smooth Banach space (cf. \cite[Theorem 1.1]{Se10}). Then
for any $q>1$, we can get
\begin{eqnarray*}
\left(\mathbb{E}\left(\sup_{0\leq t\leq T}\|Y^\varepsilon_n(t)\|_V^{2q}\right)\right)^{\frac{1}{2q}}\leq\!\!\!\!\!\!\!\!&&\|x_n\|_V+\sqrt{\varepsilon}\mathbb{E}\left(\sup_{0\leq t\leq T}\left\|\int_0^tB(Y_n^{\varepsilon}(s))dW(s)\right\|_V^{2q}\right)^{\frac{1}{2q}}
\\\leq\!\!\!\!\!\!\!\!&&\|x_n\|_V+\sqrt{2q\varepsilon}C\left(\mathbb{E}\left(\int_0^T\left\|B(Y_n^{\varepsilon}(s)\right\|_\gamma^{2}ds\right)^{q}\right)^{\frac{1}{2q}}
\\\leq\!\!\!\!\!\!\!\!&&\|x_n\|_V+\sqrt{2q\varepsilon}C\left(\mathbb{E}\left(\int_0^T(1+\left\|Y_n^{\varepsilon}(s)\right\|_V^{2})ds\right)^{q}\right)^{\frac{1}{2q}}
\\\leq\!\!\!\!\!\!\!\!&&\|x_n\|_V+\sqrt{2q\varepsilon}C\left(\int_0^T(1+\left(\mathbb{E}\|Y_n^{\varepsilon}(s)\|_V^{2q})\right)^{1/q}ds\right)^{\frac{1}{2}},
\end{eqnarray*}
where in the last inequality we use Minkowski's inequality and the constant $C$ is independent of $q$ and $\varepsilon$.

Then, it is easy to get
\begin{eqnarray*}
\left(\mathbb{E}\left(\sup_{0\leq t\leq T}\|Y^\varepsilon_n(t)\|_V^{2q}\right)\right)^{\frac{1}{q}}\leq2\|x_n\|_V^2+2q\varepsilon C+2q\varepsilon C\int_0^T\left(\mathbb{E}\|Y_n^{\varepsilon}(s)\|_V^{2q}\right)^{1/q}ds.
\end{eqnarray*}
Applying Gronwall's Lemma yields
\begin{eqnarray*}
\left(\mathbb{E}\left(\sup_{0\leq t\leq T}\|Y^\varepsilon_n(t)\|_V^{2q}\right)\right)^{\frac{1}{q}}\leq\left(2\|x_n\|_V^2+2q\varepsilon C\right)e^{2q\varepsilon C}.
\end{eqnarray*}
Fixing $M$ and taking $q=1/\varepsilon$, we have
\begin{eqnarray}\label{2.2v}
\varepsilon\log\mathbb{P}\left(\sup_{0\leq t\leq T}\|Y_n^\varepsilon(t)\|_{V}^2>M\right)\leq\!\!\!\!\!\!\!\!&&\varepsilon\log\frac{\mathbb{E}\left(\sup_{0\leq t\leq T}\|Y^\varepsilon_n(t)\|_V^{2q}\right)}{M^q}
\nonumber\\\leq\!\!\!\!\!\!\!\!&&-\log M+\log\left(\mathbb{E}\left(\sup_{0\leq t\leq T}\|Y_n^\varepsilon(t)\|_{V}^{2q}\right)\right)^{1/q}\nonumber\\
\leq\!\!\!\!\!\!\!\!&&-\log M+\log\left(2\|x_n\|_V^2+2C\right)+2C.
\end{eqnarray}
Let $M\rightarrow\infty$ on both sides of \eref{2.2v}, we complete the proof.
\end{proof}

Now, we establish the exponential convergence of $X_n^{\varepsilon}-X^\varepsilon$.

\begin{lem} \label{3.3}
For any $\delta>0$, we have
\begin{align*}
\lim_{n\rightarrow\infty}\sup_{0<\varepsilon\leq1}\varepsilon\log\mathbb{P}\left(\sup_{0\leq t\leq T}\|X_n^{\varepsilon}(t)-X^\varepsilon(t)\|_{H}^2>\delta\right)=-\infty.
\end{align*}
\end{lem}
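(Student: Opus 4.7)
The strategy is to combine an It\^{o}-and-Gronwall estimate for the difference $Z_n^\varepsilon(t):=X_n^\varepsilon(t)-X^\varepsilon(t)$ with exponential tail bounds for the associated martingale, localized via the energy stopping times furnished by Lemma \ref{3.1} and \eqref{2.3}.

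First, I would apply It\^{o}'s formula to $\|Z_n^\varepsilon(t)\|_H^2$. The local monotonicity $(A2)$ and the Lipschitz estimate on $B$ in Assumption \ref{Assumption2} yield
\[
\|Z_n^\varepsilon(t)\|_H^2+\varepsilon\eta\int_0^t\|Z_n^\varepsilon(s)\|_V^\alpha ds\leq\|x_n-x\|_H^2+\varepsilon\int_0^t(K+C+\rho(X^\varepsilon(s)))\|Z_n^\varepsilon(s)\|_H^2 ds+2\sqrt{\varepsilon}\,M_t,
\]
where $M_t$ is a continuous local martingale with quadratic variation $\langle M\rangle_t\leq C\int_0^t\|Z_n^\varepsilon(s)\|_H^4 ds$. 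To control the random coefficient $\rho(X^\varepsilon)$ I would introduce the joint energy stopping time
\[
\tau_M^\varepsilon:=\inf\{t\in[0,T]:(|X^\varepsilon|_{H,V}(t))^{\beta+2}\vee(|X_n^\varepsilon|_{H,V}(t))^{\beta+2}>M\}.
\]
By Lemma \ref{3.1} and \eqref{2.3} one has $\sup_{n,\,0<\varepsilon\leq 1}\varepsilon\log\mathbb{P}(\tau_M^\varepsilon<T)\to-\infty$ as $M\to\infty$; meanwhile, the growth bound on $\rho$ combined with the energy estimate implies $\varepsilon\int_0^T\rho(X^\varepsilon(s))ds\leq C_M$ deterministically on $\{\tau_M^\varepsilon\geq T\}$, with $C_M$ polynomial in $M$.

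A pathwise Gronwall inequality applied to the above estimate then gives
\[
\sup_{t\leq T\wedge\tau_M^\varepsilon}\|Z_n^\varepsilon(t)\|_H^2\leq e^{C_M'}\bigl(\|x_n-x\|_H^2+4\sqrt{\varepsilon}\sup_{t\leq T\wedge\tau_M^\varepsilon}|M_t|\bigr).
\]
To obtain a sharp bound on $\langle M\rangle$, I would further localize at $\sigma_\delta:=\inf\{t:\|Z_n^\varepsilon(t)\|_H^2>\delta\}$, so that $\langle M\rangle_{t\wedge\sigma_\delta}\leq C\delta^2 T$. Since $\{\sup_t\|Z_n^\varepsilon\|_H^2>\delta\}=\{\sigma_\delta\leq T\}$ and $\|Z_n^\varepsilon(\sigma_\delta)\|_H^2=\delta$ by path continuity, evaluating the Gronwall bound at $t=\sigma_\delta\wedge\tau_M^\varepsilon$, once $n$ is large enough that $e^{C_M'}\|x_n-x\|_H^2\leq \delta/2$, forces $\sup_{t\leq\sigma_\delta\wedge\tau_M^\varepsilon}|M_t|\geq \delta e^{-C_M'}/(8\sqrt{\varepsilon})$.

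The exponential martingale inequality $\mathbb{P}(\sup|M|\geq a,\,\langle M\rangle\leq K)\leq 2\exp(-a^2/(2K))$ then delivers a bound of the form $2\exp(-c_M/\varepsilon)$ for this last event. Splitting
\[
\mathbb{P}\bigl(\sup_{0\leq t\leq T}\|Z_n^\varepsilon(t)\|_H^2>\delta\bigr)\leq\mathbb{P}(\tau_M^\varepsilon<T)+\mathbb{P}(\sigma_\delta\leq T,\,\tau_M^\varepsilon\geq T),
\]
taking $\varepsilon\log$, and choosing $M=M_n\to\infty$ suitably slowly as $n\to\infty$, one obtains $\sup_{0<\varepsilon\leq 1}\varepsilon\log\mathbb{P}(\cdot)\to-\infty$. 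The main obstacle is precisely the interplay between the two stopping times: $M$ must be large enough that $\mathbb{P}(\tau_M^\varepsilon<T)$ decays super-exponentially in $1/\varepsilon$, yet the Gronwall constant $e^{C_M'}$ cannot be allowed to neutralize the martingale tail of order $\exp(-\delta^2 e^{-2C_M'}/(C M^2 T\varepsilon))$. The balancing argument hinges on exploiting $\|x_n-x\|_H\to 0$ to slacken the martingale threshold and tuning $M=M_n\to\infty$ so that both exponents diverge uniformly in $\varepsilon\in(0,1]$.
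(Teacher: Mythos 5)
Your overall skeleton (energy stopping time from Lemma \ref{3.1}, It\^{o}--Gronwall on the difference, split into an off-energy-ball event and a controlled event) matches the paper, but the quantitative core is different and, as presented, does not close. The paper applies the $L^q$ Davis/BDG-type inequality \eqref{martingale inequality} with the sharp constant $q^{1/2}$, takes $q=2/\varepsilon$, and after Gronwall obtains
$\left(\mathbb{E}\bigl[\sup_{t\le T\wedge\tau_M^\varepsilon}\|Z_n^\varepsilon(t)\|_H^2\bigr]^q\right)^{2/q}\le 2\|x-x_n\|_H^4\, e^{C_{T,M,K}}$,
so Chebyshev gives $\varepsilon\log\mathbb{P}(\cdots)\le \log(2\|x-x_n\|_H^4)+C_{T,M,K}-2\log\delta$. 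The decisive feature is that $\log\|x-x_n\|_H^4\to-\infty$ as $n\to\infty$ while $C_{T,M,K}$ is a fixed \emph{additive} constant; so one fixes $M$ first (to control $\mathbb{P}(\tau_M^\varepsilon<T)$ at level $e^{-R/\varepsilon}$) and only then sends $n\to\infty$.

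Your exponential-martingale route does not have this feature, and the ``balancing'' you defer to cannot be carried out. After the pathwise Gronwall you force $\sup_{t\le\sigma_\delta\wedge\tau_M^\varepsilon}|M_t|\ge a$ with $a=\delta e^{-C_M'}/(8\sqrt{\varepsilon})$, while $\langle M\rangle\le C\delta^2 T$; the exponential martingale inequality then gives $\mathbb{P}\le 2\exp\bigl(-e^{-2C_M'}/(128CT\varepsilon)\bigr)$, hence
$\sup_{0<\varepsilon\le1}\varepsilon\log\mathbb{P}(\sigma_\delta\le T,\,\tau_M^\varepsilon\ge T)\le \log 2 - e^{-2C_M'}/(128CT)$.
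This quantity is \emph{bounded below} (indeed it tends to $\log 2$ as $M\to\infty$, since $C_M'\to\infty$), so it cannot be driven to $-\infty$, no matter how slowly $M_n\to\infty$. Nor can you hold $M$ fixed and exploit $n\to\infty$: the smallness of $\|x_n-x\|_H$ enters your argument only as the threshold condition $e^{C_M'}\|x_n-x\|_H^2\le\delta/2$, which is a binary constraint rather than a term whose logarithm diverges. The underlying obstruction is scale invariance: both the forced martingale deviation $a$ and $\langle M\rangle^{1/2}$ scale linearly in $Z_n^\varepsilon$, so the exponent $a^2/(2\langle M\rangle)$ is essentially insensitive to how small the initial discrepancy $\|x_n-x\|_H$ is. To fix the proof you need to replace the exponential martingale step by the $L^q$ BDG estimate with $q\asymp1/\varepsilon$ (as in the paper's \eqref{martingale inequality}), so that $\|x_n-x\|_H$ survives as a multiplicative factor and $\log\|x_n-x\|_H\to-\infty$ does the work.
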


\begin{proof}

For $M>0$, define stopping time
$$\tau_{M}^{\varepsilon}=\inf\left\{t:\varepsilon\int_0^t\|X^{\varepsilon}(s)\|_{H}^{p-2}
\|X^{\varepsilon}(s)\|_{V}^{\alpha}ds>M, \text{or}~ \|X^{\varepsilon}(t)\|_{H}^2>M\right\}.$$
Using It\^{o}'s formula, the local monotone condition (A2), we can get
\begin{eqnarray}\label{2.4}
&&e^{-\varepsilon\int_0^{t\wedge\tau_{M}^{\varepsilon}}(K+\rho(X^{\varepsilon}(s)))ds}\|X^{\varepsilon}(t)-X_n^{\varepsilon}(t)\|_{H}^2
\nonumber\\=\!\!\!\!\!\!\!\!&&
\|x-x_n\|_H^2-\varepsilon\int_0^{t\wedge\tau_{M}^{\varepsilon}}e^{-\varepsilon\int_0^{s\wedge\tau_{M}^{\varepsilon}}(K+\rho(X^{\varepsilon}(r)))dr}(K+\rho(X^{\varepsilon}(s)))\|X^{\varepsilon}(s)-X_n^{\varepsilon}(s)\|_{H}^2ds
\nonumber\\\!\!\!\!\!\!\!\!&&+\varepsilon\int_0^{t\wedge\tau_{M}^{\varepsilon}}
e^{-\varepsilon\int_0^{s\wedge\tau_{M}^{\varepsilon}}(K+\rho(X^{\varepsilon}(r)))dr}\Big (2_{V^*}\langle A(X^{\varepsilon}(s))-A(X_n^{\varepsilon}(s)),X^{\varepsilon}(s)-X_n^{\varepsilon}(s)\rangle_V \nonumber\\\!\!\!\!\!\!\!\!&&+\|B(X^{\varepsilon}(s))-B(X_n^{\varepsilon}(s))\|_2^2\Big)ds
\nonumber\\\!\!\!\!\!\!\!\!&&+2\sqrt{\varepsilon}\int_0^{t\wedge\tau_{M}^{\varepsilon}}
e^{-\varepsilon\int_0^{s\wedge\tau_{M}^{\varepsilon}}(K+\rho(X^{\varepsilon}(r)))dr}
\langle X^{\varepsilon}(s)-X_n^{\varepsilon}(s),(B(X^{\varepsilon}(s))-B(X_n^{\varepsilon}(s)))dW(s)\rangle
\nonumber\\\leq\!\!\!\!\!\!\!\!&&
\|x-x_n\|_H^2
\nonumber\\\!\!\!\!\!\!\!\!&&+2\sqrt{\varepsilon}\int_0^{t\wedge\tau_{M}^{\varepsilon}}
e^{-\varepsilon\int_0^{s\wedge\tau_{M}^{\varepsilon}}(K+\rho(X^{\varepsilon}(r)))dr}
\langle X^{\varepsilon}(s)-X_n^{\varepsilon}(s),(B(X^{\varepsilon}(s))-B(X_n^{\varepsilon}(s)))dW(s)\rangle.
\nonumber\end{eqnarray}
Then by the martingale inequality \eref{martingale inequality}, we obtain
\begin{eqnarray*}
&&\left(\mathbb{E}\left[\sup_{0\leq s\leq t\wedge\tau_{M}^{\varepsilon}}e^{-\varepsilon\int_0^{s}(K+\rho(X^{\varepsilon}(r)))dr}
\|X^{\varepsilon}(s)-X_n^{\varepsilon}(s)\|_{H}^2\right]^q\right)^{2/q}
\\\leq\!\!\!\!\!\!\!\!&&
2\|x-x_n\|_H^4+C(q\varepsilon+t\varepsilon^2)\int_0^{t}
\Big(\mathbb{E}\Big[\sup_{0\leq r\leq s\wedge\tau_{M}^{\varepsilon}}e^{-\varepsilon\int_0^{s}(K+\rho(X^{\varepsilon}(r)))dr}
\|X^{\varepsilon}(s)-X_n^{\varepsilon}(s)\|_{H}^2\Big]^q\Big)^{2/q}ds.
\end{eqnarray*}
Applying Gronwall's lemma yields
\begin{eqnarray*}
&&\left(\mathbb{E}\left[\sup_{0\leq s\leq T\wedge\tau_{M}^{\varepsilon}}e^{-\varepsilon\int_0^{s}(K+\rho(X^{\varepsilon}(r)))dr}
\|X^{\varepsilon}(s)-X_n^{\varepsilon}(s)\|_{H}^2\right]^q\right)^{2/q}
\leq
2\|x-x_n\|_H^4e^{CT(q\varepsilon+T\varepsilon^2)}.
\end{eqnarray*}
Hence, we have
\begin{eqnarray*}
&&\left(\mathbb{E}\left[\sup_{0\leq s\leq T\wedge\tau_{M}^{\varepsilon}}
\|X^{\varepsilon}(s)-X_n^{\varepsilon}(s)\|_{H}^2\right]^q\right)^{2/q}
\\\leq\!\!\!\!\!\!\!\!&&\left(\mathbb{E}\left[\sup_{0\leq s\leq T\wedge\tau_{M}^{\varepsilon}}e^{-\varepsilon\int_0^{s}(K+\rho(X^{\varepsilon}(r)))dr}
\|X^{\varepsilon}(s)-X_n^{\varepsilon}(s)\|_{H}^2e^{q\varepsilon\int_0^{s}(K+\rho(X^{\varepsilon}(r)))dr}\right]^q\right)^{2/q}
\\\leq\!\!\!\!\!\!\!\!&&2\|x-x_n\|_H^4e^{CT(M+K+q\varepsilon+T\varepsilon^2)}.
\end{eqnarray*}
Fixing $M$ and taking $q=2/{\varepsilon}$ we get
\begin{eqnarray}\label{2.5}
&&\sup_{0<\varepsilon\leq1}\varepsilon\log\mathbb{P}\left(\sup_{0\leq t\leq T\wedge\tau_{M}^{\varepsilon}}\|X^{\varepsilon}(t)-X_n^{\varepsilon}(t)\|_{H}^2>\delta\right)
\nonumber\\\leq\!\!\!\!\!\!\!\!&&
\sup_{0<\varepsilon\leq1}\varepsilon\log\frac{\mathbb{E}\left[\sup_{0\leq t\leq T\wedge\tau_{M}^{\varepsilon}}\|X^{\varepsilon}(t)-X_n^{\varepsilon}(t)\|_{H}^{2q}\right]}{\delta^q}
\nonumber\\
\leq\!\!\!\!\!\!\!\!&&\log 2\|x-x_n\|_H^4+C_{T,M,K}-2\log\delta
\nonumber\\
\rightarrow\!\!\!\!\!\!\!\!&&-\infty,~~\text{as}~n\rightarrow+\infty.
\end{eqnarray}

By Lemma \ref{3.1}, for any $R>0$, there exists a constant $M$ such that for every $\varepsilon\in(0,1]$ the following inequality holds:
\begin{eqnarray}\label{2.6}
\mathbb{P}\left(\left(|X^\varepsilon|_{H,V}(T)\right)^p>M\right)\leq e^{-R/\varepsilon}.
\end{eqnarray}
For such $M$, by \eref{2.5} and the definition of stoping time $\tau_{M}^{\varepsilon}$, there exists a positive integer $N$, such that for any $n\geq N$,
\begin{eqnarray}\label{2.7}
&&\sup_{0<\varepsilon\leq1}\varepsilon\log\mathbb{P}\left(\sup_{0\leq t\leq T}\|X^{\varepsilon}(t)-X_n^{\varepsilon}(t)\|_{H}^2>\delta,\left(|X^\varepsilon|_{H,V}(T)\right)^p\leq M\right)
\nonumber\\\leq\!\!\!\!\!\!\!\!&&
\sup_{0<\varepsilon\leq1}\varepsilon\log\mathbb{P}\left(\sup_{0\leq t\leq T\wedge\tau_{M}^{\varepsilon}}\|X^{\varepsilon}(t)-X_n^{\varepsilon}(t)\|_{H}^2>\delta\right)\leq-R.
\end{eqnarray}
Combining \eref{2.5} and \eref{2.7}, we conclude that there exists a positive integer $N$ such that for any $n\geq N$
and $\varepsilon\in(0,1]$,
\begin{eqnarray*}
\mathbb{P}\left(\sup_{0\leq t\leq T}\|X^{\varepsilon}(t)-X_n^{\varepsilon}(t)\|_{H}^2>\delta\right)\leq2e^{-R/\varepsilon}.
\end{eqnarray*}
Since $R$ is arbitrary, the assertion of the lemma follows.
\end{proof}

We also need to establish the exponential convergence of $Y_n^{\varepsilon}-Y^\varepsilon$.

\begin{lem} \label{3.4}
For any $\delta>0$, we have
\begin{align}\label{2.8}
\lim_{n\rightarrow\infty}\sup_{0<\varepsilon\leq1}\varepsilon\log\mathbb{P}\left(\sup_{0\leq t\leq T}\|Y_n^{\varepsilon}(t)-Y^\varepsilon(t)\|_{H}^2>\delta\right)=-\infty.
\end{align}
\end{lem}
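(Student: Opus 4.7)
The plan is to adapt the argument of Lemma \ref{3.3} to the present situation, noting that a substantial simplification occurs because the equations \eref{Y} and \eref{Yn} have zero drift, so the global Lipschitz bound on $B$ in the $\|\cdot\|_2$-norm from Assumption \ref{Assumption2} can be used directly in $H$, eliminating any need for a localizing stopping time.

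First, I would subtract \eref{Y} from \eref{Yn} and apply It\^o's formula to $\|Y_n^\varepsilon(t)-Y^\varepsilon(t)\|_H^2$. The It\^o correction term $\varepsilon\|B(Y_n^\varepsilon(s))-B(Y^\varepsilon(s))\|_2^2$ is controlled by $C\varepsilon\|Y_n^\varepsilon(s)-Y^\varepsilon(s)\|_H^2$ via Assumption \ref{Assumption2}, yielding
\begin{align*}
\|Y_n^\varepsilon(t)-Y^\varepsilon(t)\|_H^2
\leq{}& \|x_n-x\|_H^2+C\varepsilon\int_0^t\|Y_n^\varepsilon(s)-Y^\varepsilon(s)\|_H^2\,ds\\
&+2\sqrt{\varepsilon}\int_0^t\langle Y_n^\varepsilon(s)-Y^\varepsilon(s),(B(Y_n^\varepsilon(s))-B(Y^\varepsilon(s)))dW(s)\rangle.
\end{align*}

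Next, for any $q\geq 2$, I would take the supremum over $[0,t]$, pass to the $q$-th moment, use Minkowski's inequality on the Bochner integral, and apply the sharp martingale inequality \eref{martingale inequality} together with the Lipschitz bound of Assumption \ref{Assumption2} to the stochastic integral. Setting
$$\Phi_q(t):=\left(\mathbb{E}\sup_{0\leq s\leq t}\|Y_n^\varepsilon(s)-Y^\varepsilon(s)\|_H^{2q}\right)^{2/q},$$
this gives
$$\Phi_q(t)\leq C\|x_n-x\|_H^4+C(\varepsilon^2T+q\varepsilon)\int_0^t\Phi_q(s)\,ds,$$
and Gronwall's lemma then yields
$$\Phi_q(T)\leq C\|x_n-x\|_H^4\exp\bigl(CT(\varepsilon^2T+q\varepsilon)\bigr).$$

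Finally, Chebyshev's inequality with the choice $q=2/\varepsilon$ produces, uniformly in $\varepsilon\in(0,1]$,
$$\varepsilon\log\mathbb{P}\Bigl(\sup_{0\leq t\leq T}\|Y_n^\varepsilon(t)-Y^\varepsilon(t)\|_H^2>\delta\Bigr)\leq -2\log\delta+4\log\|x_n-x\|_H+C_T,$$
whose right-hand side tends to $-\infty$ as $n\to\infty$ since $\|x_n-x\|_H\to 0$, establishing \eref{2.8}. No serious obstacle is expected; the only point requiring care is that the constant in the martingale inequality grows only like $\sqrt{q}$, so that with $q=2/\varepsilon$ the combination $q\varepsilon$ in the exponent stays bounded as $\varepsilon\to 0$, which is exactly what permits the exponential rate to survive.
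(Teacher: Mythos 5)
Your proof is correct and follows essentially the same route as the paper's: It\^o's formula applied to $\|Y_n^\varepsilon(t)-Y^\varepsilon(t)\|_H^2$, the global Lipschitz bound on $B$ in the Hilbert--Schmidt norm from Assumption \ref{Assumption2}, the sharp martingale inequality \eref{martingale inequality} with constant $\sqrt{q}$, Gronwall's lemma, and Chebyshev's inequality with $q=2/\varepsilon$. You are in fact slightly more careful than the paper in explicitly keeping the supremum over $[0,t]$ inside the expectation before taking $q$-th moments, and your observation that no stopping-time localization is needed here (unlike in Lemma \ref{3.3}) because $B$ is globally Lipschitz in the $\|\cdot\|_2$-norm is exactly the simplification the paper uses implicitly.
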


\begin{proof}
From \eref{Y} and \eref{Yn}, it is easy to see
\begin{eqnarray*}
Y^{\varepsilon}(t)-Y_n^{\varepsilon}(t)=x-x_n+\sqrt{\varepsilon}\int_0^t(B(Y_n^{\varepsilon}(s))-B(Y^{\varepsilon}(s)))dW(s).
\end{eqnarray*}
Applying It\^{o}'s formula to $\|Y_n^{\varepsilon}(t)-Y^\varepsilon(t)\|_{H}^2$, we have
\begin{eqnarray*}
&&\|Y^{\varepsilon}(t)-Y_n^{\varepsilon}(t)\|_{H}^2
\nonumber\\=\!\!\!\!\!\!\!\!&&
\|x-x_n\|_H^2+\varepsilon\int_0^{t}\|B(Y^{\varepsilon}(s))-B(Y_n^{\varepsilon}(s))\|_2^2ds
\nonumber\\\!\!\!\!\!\!\!\!&&+2\sqrt{\varepsilon}\int_0^{t}
\langle Y^{\varepsilon}(s)-Y_n^{\varepsilon}(s),(B(Y^{\varepsilon}(s))-B(Y_n^{\varepsilon}(s)))dW(s)\rangle.
\nonumber\end{eqnarray*}
Then by the Assumption \eref{Assumption2} and martingale inequality \eref{martingale inequality}, we obtain
\begin{eqnarray*}
&&\left(\mathbb{E}\left(\|Y^{\varepsilon}(t)-Y_n^{\varepsilon}(t)\|_{H}^{2q}\right)\right)^{\frac{2}{q}}
\nonumber\\\leq\!\!\!\!\!\!\!\!&&
2\|x-x_n\|_H^4+(\varepsilon^2C+\varepsilon Cq)\int_0^{t}\left(\mathbb{E}\left(\|Y^{\varepsilon}(s)-Y_n^{\varepsilon}(s)\|_{H}^{2q}\right)\right)^{\frac{2}{q}}ds,
\nonumber\end{eqnarray*}
where the constant $C$ is independent of $q$ and $\varepsilon$.

Utilizing Gronwall's lemma, we get
\begin{eqnarray*}
&&\left(\mathbb{E}\left(\|Y^{\varepsilon}(t)-Y_n^{\varepsilon}(t)\|_{H}^{2q}\right)\right)^{\frac{2}{q}}
\leq
2\|x-x_n\|_H^4+\exp(\varepsilon^2C+\varepsilon Cq).\end{eqnarray*}
Applying the same argument as the proof of \eref{2.5} in Lemma \ref{3.3}, we complete the proof.
\end{proof}

The following lemma says that for a fixed integer $n$, the two families $\{X_n^{\varepsilon},{\varepsilon}>0\}$ and $\{Y_n^{\varepsilon},{\varepsilon}>0\}$
are exponentially equivalent.

\begin{lem} \label{3.5}
For any $\delta>0$ and any positive integer $n$, we have
$$\lim_{\varepsilon\rightarrow0}\varepsilon\log\mathbb{P}\left(\sup_{0\leq t\leq T}\|X^{\varepsilon}_n(t)-Y_n^{\varepsilon}(t)\|_{H}^2>\delta\right)=-\infty.$$
\end{lem}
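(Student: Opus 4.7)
The plan is to mirror the localization-plus-moment-estimate strategy of Lemma \ref{3.3}, with the additional twist that the drifts of $X_n^\varepsilon$ and $Y_n^\varepsilon$ do not cancel, so one has to bound $\varepsilon A(\varepsilon s,X_n^\varepsilon)$ directly. Setting $Z_n^\varepsilon:=X_n^\varepsilon-Y_n^\varepsilon$ and subtracting \eref{Yn} from \eref{small equation} (written with initial value $x_n$) gives
\[
Z_n^\varepsilon(t)=\varepsilon\int_0^t A(\varepsilon s,X_n^\varepsilon(s))\,ds+\sqrt{\varepsilon}\int_0^t\bigl(B(X_n^\varepsilon(s))-B(Y_n^\varepsilon(s))\bigr)\,dW(s).
\]
Because $x_n\in V$, Lemma \ref{3.2} ensures $Y_n^\varepsilon(s)\in V$ for a.e.\ $(s,\omega)$, hence $A(\varepsilon s,Y_n^\varepsilon(s))\in V^*$ is well defined. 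I would apply It\^o's formula (in the sense of \cite[Thm.~4.2.5]{LR15}) to $\|Z_n^\varepsilon\|_H^2$ and, after adding and subtracting $A(\varepsilon s,Y_n^\varepsilon)$ inside the drift pairing, invoke the local monotonicity (A2) to bound $\langle A(\varepsilon s,X_n^\varepsilon)-A(\varepsilon s,Y_n^\varepsilon),Z_n^\varepsilon\rangle$ by $-\tfrac{\eta}{2}\|Z_n^\varepsilon\|_V^\alpha+(K+\rho(Y_n^\varepsilon))\|Z_n^\varepsilon\|_H^2$, and use (A3) with Young's inequality to absorb the remaining cross-term $\langle A(\varepsilon s,Y_n^\varepsilon),Z_n^\varepsilon\rangle$ into $\tfrac{\eta}{4}\|Z_n^\varepsilon\|_V^\alpha+C(1+\|Y_n^\varepsilon\|_V^\alpha)(1+\|Y_n^\varepsilon\|_H^\beta)$.

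To handle the locally bounded factor $\rho(Y_n^\varepsilon)$ and the forcing term produced above, I would introduce the stopping time
\[
\tau_M^{\varepsilon,n}:=\inf\bigl\{t\ge 0:\|Y_n^\varepsilon(t)\|_V^2>M\ \text{or}\ \|X_n^\varepsilon(t)\|_H^2>M\bigr\}\wedge T.
\]
On $[0,\tau_M^{\varepsilon,n}]$ both $K+C+\rho(Y_n^\varepsilon)$ and $(1+\|Y_n^\varepsilon\|_V^\alpha)(1+\|Y_n^\varepsilon\|_H^\beta)$ are bounded by some constant $C_M$. Taking the $q$-th moment of $\sup_{s\le t}\|Z_n^\varepsilon(s)\|_H^2$, estimating the martingale by the Davis inequality \eref{martingale inequality} (which contributes a factor $Cq^{1/2}\sqrt{\varepsilon}$) and applying Minkowski, one arrives at a Gronwall-type inequality whose solution is
\[
\bigl(\mathbb{E}\sup_{s\le T\wedge\tau_M^{\varepsilon,n}}\|Z_n^\varepsilon(s)\|_H^{2q}\bigr)^{2/q}\le C_M\,\varepsilon^2\,T^2\exp\bigl(C_{M,T}(q\varepsilon+\varepsilon^2)\bigr),
\]
the decisive $O(\varepsilon^2)$ prefactor reflecting both the vanishing of $Z_n^\varepsilon(0)$ and the fact that the deterministic forcing is squared.

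Choosing $q=2/\varepsilon$ and invoking Chebyshev as in \eref{2.5} then yields $\varepsilon\log\mathbb{P}\bigl(\sup_{t\le T\wedge\tau_M^{\varepsilon,n}}\|Z_n^\varepsilon(t)\|_H^2>\delta\bigr)\le -2\log\delta+2\log\varepsilon+\log(C_M T^2)+C_{M,T}\to-\infty$ as $\varepsilon\to 0$. The stopping time is removed via the tail bounds $\sup_{0<\varepsilon\le 1}\varepsilon\log\mathbb{P}(\tau_M^{\varepsilon,n}<T)\to-\infty$ as $M\to\infty$, which follow from \eref{2.3} (applied with $p=2$ to $X_n^\varepsilon$) and Lemma \ref{3.2}: for arbitrary $R>0$ one picks $M$ so large that this probability is $\le e^{-R/\varepsilon}$ uniformly in $\varepsilon\le 1$, then splits $\mathbb{P}(\sup\|Z_n^\varepsilon\|_H^2>\delta)$ into exit and non-exit parts and lets $\varepsilon\to 0$ followed by $R\to\infty$. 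The main obstacle in the scheme is the absence of drift cancellation; the remedy is the add-and-subtract $A(\varepsilon s,Y_n^\varepsilon)$ manoeuvre, which is only legitimate because the $V$-valued approximation of the initial data and the 2-smooth Banach space BDG estimate underlying Lemma \ref{3.2} guarantee that $Y_n^\varepsilon$ takes values in $V$ with controlled exit probabilities.
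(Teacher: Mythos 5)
Your proof is correct and follows essentially the same route as the paper's: localize via a stopping time built from Lemma \ref{3.2} and the energy bound \eref{2.3}, apply It\^{o}'s formula to $\|X_n^\varepsilon-Y_n^\varepsilon\|_H^2$ with the add-and-subtract of $A(\varepsilon s, Y_n^\varepsilon)$, absorb $\langle A(\varepsilon s,Y_n^\varepsilon),Z_n^\varepsilon\rangle$ by (A3) and Young, absorb the difference term by (A2), estimate the martingale with the sharp $q^{1/2}$-constant inequality \eref{martingale inequality}, run Gronwall, take $q=2/\varepsilon$, and remove the stopping time via the exponential tail estimates. The only deviation is cosmetic: you apply (A2) with $v_1=X_n^\varepsilon$, $v_2=Y_n^\varepsilon$ so that $\rho$ is evaluated at $Y_n^\varepsilon$, and consequently the localization only needs the pointwise $V$-norm cap on $Y_n^\varepsilon$ from Lemma \ref{3.2} (the clause $\|X_n^\varepsilon(t)\|_H^2>M$ you include is actually never used in your bound); the paper instead uses $\rho(X_n^\varepsilon)$ and therefore also needs the energy-functional cap $\varepsilon\int_0^t\|X_n^\varepsilon\|_H^{p-2}\|X_n^\varepsilon\|_V^\alpha ds\le M$ in its $\tau_{M,\varepsilon}^{1,n}$. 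Both choices are legitimate, since Assumption \ref{Assumption1}(A2) is imposed for arbitrary pairs $(v_1,v_2)\in V\times V$, and yours is a modest simplification.
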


\begin{proof}
For $M>0$, we define the following stopping times
$$\tau_{M,\varepsilon}^{1,n}=\inf\left\{t:\varepsilon\int_0^t\|X_n^{\varepsilon}(s)\|_{H}^{p-2}
\|X_n^{\varepsilon}(s)\|_{V}^{\alpha}ds>M, \text{or}~ \|X_n^{\varepsilon}(t)\|_{H}^2>M\right\},$$
$$\tau_{M,\varepsilon}^{2,n}=\inf\left\{t: \|Y_n^{\varepsilon}(t)\|_{V}^2>M\right\}.$$
Setting $\tau_{M,\varepsilon}^{n}=\tau_{M,\varepsilon}^{1,n}\wedge\tau_{M,\varepsilon}^{2,n}$, then we can get by
applying It\^{o}'s formula that
\begin{eqnarray*}
&&\|X_n^{\varepsilon}(t\wedge\tau_{M,\varepsilon}^{n})-Y_n^{\varepsilon}(t\wedge\tau_{M,\varepsilon}^{n})\|_H^{2}
\\=\!\!\!\!\!\!\!\!&&\varepsilon\int_{0} ^{t\wedge\tau_{M,\varepsilon}^{n}}(2_{V^*}\langle A(\vare s,X_n^{\vare}(s))-A(\vare s,Y_n^{\vare}(s),X_n^{\vare}(s)-Y_n^{\vare}(s)\rangle_V+\|B(X_n^{\vare}(s))-B(Y_n^{\vare}(s))\|_2^2)ds
 \\ \!\!\!\!\!\!\!\!&& +\varepsilon\int_{0} ^{t\wedge\tau_{M,\varepsilon}^{n}}2_{V^*}\langle A(\vare s,Y_n^{\vare}(s)),X_n^{\vare}(s)-Y_n^{\vare}(s)\rangle_Vds
  \\ \!\!\!\!\!\!\!\!&& +2\sqrt{\varepsilon}\int_{0} ^{t\wedge\tau_{M,\varepsilon}^{n}}\langle X_n^{\vare}(s)-X_n^{\vare}(s),(B(X_n^{\vare}(s))-B(Y_n^{\vare}(s)))dW(s)\rangle.
\end{eqnarray*}
By condition (A3) and Young's inequality, we have
\begin{eqnarray*}
2_{V^*}\langle A(\vare s,Y_n^{\vare}(s)),X_n^{\vare}(s)-Y_n^{\vare}(s)\rangle_V
\leq\!\!\!\!\!\!\!\!&& 2\|A(\vare s,Y_n^{\vare}(s))\|_{V^*}\|X_n^{\vare}(s)-Y_n^{\vare}(s)\|_V
\\ \leq\!\!\!\!\!\!\!\!&& C\|A(\vare s,Y_n^{\vare}(s))\|_{V^*}^{\frac{\alpha-1}{\alpha}}+\theta\|X_n^{\vare}(s)-Y_n^{\vare}(s)\|_V^\alpha
\\ \leq\!\!\!\!\!\!\!\!&& C(1+\|Y_n^{\vare}(s)\|_V^{\alpha+\beta})+\theta\|X_n^{\vare}(s)-Y_n^{\vare}(s)\|_V^\alpha,
\end{eqnarray*}
where $\theta<\eta$ is a small positive constant. Then by condition (A2), we obtain that
\begin{eqnarray*}
&&\|X_n^{\varepsilon}(t\wedge\tau_{M,\varepsilon}^{n})-Y_n^{\varepsilon}(t\wedge\tau_{M,\varepsilon}^{n})\|_H^{2}
\\\leq\!\!\!\!\!\!\!\!&&\varepsilon\int_{0} ^{t\wedge\tau_{M,\varepsilon}^{n}}(K+\rho(X_n^{\vare}(s))\|X_n^{\vare}(s)-Y_n^{\vare}(s)\|_H^2ds
 \\ \!\!\!\!\!\!\!\!&& +\varepsilon\int_{0} ^{t\wedge\tau_{M,\varepsilon}^{n}}C(1+\|Y_n^{\vare}(s)\|_V^{\alpha+\beta})-(\eta-\theta)\|X_n^{\vare}(s)-Y_n^{\vare}(s)\|_V^\alpha ds
  \\ \!\!\!\!\!\!\!\!&& +2\sqrt{\varepsilon}\int_{0} ^{t\wedge\tau_{M,\varepsilon}^{n}}\langle X_n^{\vare}(s)-X_n^{\vare}(s),(B(X_n^{\vare}(s))-B(Y_n^{\vare}(s)))dW(s)\rangle.
\end{eqnarray*}
Then applying Gronwall's lemma yields that
\begin{eqnarray*}
&&\|X_n^{\varepsilon}(t\wedge\tau_{M,\varepsilon}^{n})-Y_n^{\varepsilon}(t\wedge\tau_{M,\varepsilon}^{n})\|_H^{2}
\\\leq\!\!\!\!\!\!\!\!&&\Big[C\varepsilon\int_{0} ^{t\wedge\tau_{M,\varepsilon}^{n}}(1+\|Y_n^{\vare}(s)\|_V^{\alpha+\beta}) ds
  \\ \!\!\!\!\!\!\!\!&& +2\sqrt{\varepsilon}\int_{0} ^{t\wedge\tau_{M,\varepsilon}^{n}}\langle X_n^{\vare}(s)-X_n^{\vare}(s),(B(X_n^{\vare}(s))-B(Y_n^{\vare}(s)))dW(s)\rangle\Big]
\\ \!\!\!\!\!\!\!\!&&\cdot e^{\varepsilon\int_0^{t\wedge\tau_{M,\varepsilon}}(K+\rho(X_n^{\vare}(s))ds}.
\end{eqnarray*}
Using \eref{martingale inequality}, we obtain by the definition of the stopping time
$\tau_{M,\varepsilon}^{n}$ that
\begin{eqnarray*}
&&\left(\mathbb{E}\left[\sup_{0\leq s\leq t\wedge\tau_{M,\varepsilon}^{n}}\|X_n^{\varepsilon}(s)-Y_n^{\varepsilon}(s)\|_H^{2}\right]^q\right)^{2/q}
\\\leq\!\!\!\!\!\!\!\!&&Ce^{M+(M+C)t\varepsilon}\Big[C(t\varepsilon+M^{\frac{\alpha+\beta}{2}}t\varepsilon)^2+q\varepsilon
  \\ \!\!\!\!\!\!\!\!&& \cdot \int_0^t\left(\mathbb{E}\left[\sup_{0\leq r\leq s\wedge\tau_{M,\varepsilon}^{n}}\|X_n^{\varepsilon}(r)-Y_n^{\varepsilon}(r)\|_H^{2}\right]^q\right)^{2/q}ds\Big].
\end{eqnarray*}
Applying Gronwall's lemma again, we obtain that
\begin{eqnarray*}
&&\left(\mathbb{E}\left[\sup_{0\leq s\leq T\wedge\tau_{M,\varepsilon}^{n}}\|X_n^{\varepsilon}(s)-Y_n^{\varepsilon}(s)\|_H^{2}\right]^q\right)^{2/q}
\\\leq\!\!\!\!\!\!\!\!&&Ce^{M+(M+C)T\varepsilon}(T\varepsilon+M^{\frac{\alpha+\beta}{2}}T\varepsilon)^2\exp\left[CqT\varepsilon e^{M+(M+C)T\varepsilon}\right].
\end{eqnarray*}
Fixing $M$ and taking $q=2/\varepsilon$ we have
\begin{eqnarray}\label{2.9}
&&\varepsilon\log\mathbb{P}\left(\sup_{0\leq t\leq T\wedge\tau_{M,\varepsilon}^{n}}\|X_n^{\varepsilon}(t)-Y_n^{\varepsilon}(t)\|_{H}^2>\delta\right)
\nonumber\\\leq\!\!\!\!\!\!\!\!&&
\varepsilon\log\frac{\mathbb{E}\left[\sup_{0\leq t\leq T\wedge\tau_{M,\varepsilon}^{n}}\|X_n^{\varepsilon}(t)-Y_n^{\varepsilon}(t)\|_{H}^{2q}\right]}{\delta^q}
\nonumber\\
\leq\!\!\!\!\!\!\!\!&&\log C(T\varepsilon+M^{\frac{\alpha+\beta}{2}}T\varepsilon)^2 +CTe^{M+(M+C)T\varepsilon}+{M+(M+C)T\varepsilon}-2\log\delta
\nonumber\\
\rightarrow\!\!\!\!\!\!\!\!&&-\infty,~~\text{as}~\varepsilon\rightarrow0.
\end{eqnarray}
By \eref{2.3} and Lemma \ref{3.2}, for any $R>0$, there exists a constant $M$ such that the following inequalities hold:
\begin{eqnarray}\label{2.10}
\sup_{0<\varepsilon\leq1}\mathbb{P}\left(\left(|X_n^\varepsilon|_{H,V}(T)\right)^p>M\right)\leq e^{-R/\varepsilon},
\end{eqnarray}
\begin{eqnarray}\label{2.11}
\sup_{0<\varepsilon\leq1}\mathbb{P}\left(\sup_{0\leq t\leq T}\|Y_n^\varepsilon(t)\|_{V}^2>M\right)\leq e^{-R/\varepsilon}.
\end{eqnarray}
For such $M$, by \eref{2.9} and the definition of stoping time $\tau_{M,\varepsilon}^{n}$, there exists $\varepsilon_0$, such that for every $\varepsilon$ satisfying $0<\varepsilon\leq\varepsilon_0$,
\begin{eqnarray}\label{2.12}
&&\mathbb{P}\left(\sup_{0\leq t\leq T}\|X_n^{\varepsilon}(t)-Y_n^{\varepsilon}(t)\|_{H}^2>\delta,\left(|X_n^\varepsilon|_{H,V}(T)\right)^p\leq M,\sup_{0\leq t\leq T}\|Y_n^\varepsilon(t)\|_{V}^2\leq M\right)
\nonumber\\\leq\!\!\!\!\!\!\!\!&&
\mathbb{P}\left(\sup_{0\leq t\leq T\wedge\tau_{M,\varepsilon}^n}\|X_n^{\varepsilon}(t)-Y_n^{\varepsilon}(t)\|_{H}^2>\delta\right)\leq e^{-R/\varepsilon}.
\end{eqnarray}
Combining \eref{2.10}, \eref{2.11} and \eref{2.12}, we conclude that there exists $\varepsilon_0$, such that for every $\varepsilon$ satisfying $0<\varepsilon\leq\varepsilon_0$,
\begin{eqnarray*}
\mathbb{P}\left(\sup_{0\leq t\leq T}\|X_n^{\varepsilon}(t)-Y_n^{\varepsilon}(t)\|_{H}^2>\delta\right)\leq3e^{-R/\varepsilon}.
\end{eqnarray*}
Since $R$ is arbitrary, the assertion of the lemma follows.
\end{proof}
\medskip
We can now complete the proof of our main result.

\medskip
\noindent
\textbf{Proof of Theorem \ref{main result}:}
Due to Lemmas \ref{3.3} and \ref{3.4}, for any $R>0$, there exists a $N_0$ satisfying
\begin{eqnarray}\label{2.13}
\mathbb{P}\left(\sup_{0\leq t\leq T}\|X_{N_0}^{\varepsilon}(t)-X^\varepsilon(t)\|_{H}^2>\frac{\delta}{3}\right)\leq e^{-R/\varepsilon},~~\text{for any}~0<\varepsilon\leq1;
\end{eqnarray}
and
\begin{eqnarray}\label{2.14}
\mathbb{P}\left(\sup_{0\leq t\leq T}\|Y_{N_0}^{\varepsilon}(t)-Y^\varepsilon(t)\|_{H}^2>\frac{\delta}{3}\right)\leq e^{-R/\varepsilon},~~\text{for any}~0<\varepsilon\leq1.
\end{eqnarray}
For such $N_0$, according to Lemma \ref{3.5}, there exists $\varepsilon_0$, such that for every $\varepsilon$ satisfying $0<\varepsilon\leq\varepsilon_0$,
\begin{eqnarray}\label{2.15}
\mathbb{P}\left(\sup_{0\leq t\leq T}\|X_{N_0}^{\varepsilon}(t)-Y_{N_0}^{\varepsilon}(t)\|_{H}^2>\frac{\delta}{3}\right)\leq e^{-R/\varepsilon}.
\end{eqnarray}
Combining \eref{2.13}-\eref{2.15}, for any $0<\varepsilon\leq\varepsilon_0$, we have
\begin{eqnarray*}
\mathbb{P}\left(\sup_{0\leq t\leq T}\|X^{\varepsilon}(t)-Y^{\varepsilon}(t)\|_{H}^2>{\delta}\right)\leq 3e^{-R/\varepsilon}.
\end{eqnarray*}
Since $R$ is arbitrary, we obtain
\begin{eqnarray*}
\lim_{\varepsilon\rightarrow0}\varepsilon\log\mathbb{P}\left(\sup_{0\leq t\leq T}\|X^{\varepsilon}(t)-Y^{\varepsilon}(t)\|_{H}^2>\delta\right)=-\infty,
\end{eqnarray*}
i.e. \eref{Z} holds. Hence the conclusion of Theorem \ref{main result} holds by using the exponential equivalence result of LDP, see e.g. \cite[Theorem 4.2.13]{DZ}.
\hspace{\fill}$\Box$

\section{Application to examples} \label{example}
The main result of this paper is applicable to a large class of SPDE with local monotone coefficients, and we illustrate the applicability of our main result to the following concrete examples
of SPDE models.

In this section we use $\Lambda\subseteq\mathbb{R}^d$ to denote an open bounded domain with a smooth boundary  and
$C_0^\infty(\Lambda, \mathbb{R}^d)$ denote the set of all smooth functions from $\Lambda$ to $\mathbb{R}^d$
with compact support.  For $p\ge 1$, let $\left(L^p(\Lambda, \mathbb{R}^d), \|\cdot\|_{L^p} \right) $ be the vector valued $L^p$-space.
 For any integer $m> 0$, let $W_0^{m,p}(\Lambda, \mathbb{R}^d)$ denote the standard  Sobolev space on $\Lambda$
with values in $\mathbb{R}^d$,  $i.e.$  the closure of $C_0^\infty(\Lambda, \mathbb{R}^d)$ with respect to the following  norm:
$$ \|u\|_{W^{m,p}} = \left( \sum_{0\le |\alpha|\le m} \int_{\Lambda} |D^\alpha u|^pd x \right)^\frac{1}{p}.  $$
For the reader's convenience, we recall the following Gagliardo-Nirenberg interpolation inequality (cf. e.g. \cite[Theorem 2.1.5]{Taira}).

If $m,n\in\mathbb{N}$ and $q\in[1,\infty]$ such that
\[
\frac{1}{q}=\frac{1}{2}+\frac{n}{d}-\frac{m \theta}{d},\ \frac{n}{m}\le\theta\le1,
\]
then there exists a constant $C>0$ such that
\begin{equation}
\|u\|_{W^{n,q}}\le C\|u\|_{W^{m,2}}^{\theta}\|u\|_{L^{2}}^{1-\theta},\ \ u\in W^{m,2}(\Lambda, \mathbb{R}^d).\label{GN_inequality}
\end{equation}

\subsection{Stochastic multidimensional Burgers type equation}
The first example is stochastic multidimensional Burgers type equation.
Consider the Gelfand triple
$$V:=W_0^{1,2}\subset H:=L^2(\Lambda)\subset(W_0^{1,2})^*=V^*$$
and the following
semilinear stochastic partial differential equation
\begin{equation}\label{semilinear}
\left\{ \begin{aligned}
&d X(t)=(\Delta X+\langle f(X),\nabla X\rangle+g(X(t))) d t+ B(X(t))d W(t),\\
 &X(0)=x\in H,
\end{aligned} \right.
\end{equation}
where$f=(f_1,\cdots,f_d):\mathbb{R}\rightarrow\mathbb{R}^d$ is a
Lipschitz functions and $\langle \ , \ \rangle$ denotes the inner
product in $\mathbb{R}^d$, $W$ is a cylindrical Wiener process in $U$ defined
on a probability space $(\Omega,\mathcal {F},\mathcal
{F}_t,\mathbb{P})$. Let
$g:\mathbb{R}\rightarrow \mathbb{R}$ be a continuous function with
$g(0)=0$ such that for some constants $C,r,s\in [0,\infty[$
\begin{equation}\label{5.21}
|g(x)|\leq C(|x|^r+1),  x\in \mathbb{R};
\end{equation}
\begin{equation}\label{5.22}
(g(x)-g(y))(x-y)\leq C(1+|y|^s)(x-y)^2,  x,y\in \mathbb{R}.
\end{equation}

Now, for $\varepsilon>0$, we consider the small time process $X(\varepsilon t)$.
Let $\mu^{\varepsilon}$ be the law of $X(\varepsilon\cdot)$ on $C([0,T],H)$, we have
the small time LDP for \eref{semilinear}.

\begin{thm}\label{SLSPDE} (stochastic multidimensional Burgers type equation) Assume $g$ satisfies the above conditions, $B$ satisfies the assumption \ref{Assumption2}.
If $d=1, r=3, s=2$, or $d=2, r<3, s=2$, or $d=3$, $r=\frac{7}{3}$, $s=\frac{4}{3}$, then (\ref{semilinear})  has a unique solution $X(t)$ and $\mu^{\varepsilon}$ satisfies the LDP with the rate function $I(\cdot)$ given by \eref{Ig}.
\end{thm}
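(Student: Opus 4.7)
The plan is to reduce the theorem to an application of Theorem \ref{main result} by identifying the abstract operators and verifying the structural hypotheses in the specific Gelfand triple $V = W_0^{1,2}(\Lambda) \subset H = L^2(\Lambda) \subset V^*$. Define
$$A(v) = \Delta v + \langle f(v),\nabla v\rangle + g(v),$$
so that equation \eref{semilinear} is of the abstract form \eref{main equation} with $\alpha = 2$. The well-posedness in the sense of Definition \ref{S.S.} and the integrability in Lemma \ref{Lem1} already follow from the Liu--R\"ockner framework \cite{LR10}, so the remaining task is precisely to check Assumptions \ref{Assumption1} and \ref{Assumption2}. Assumption \ref{Assumption2} for $B$ is postulated, and the hemicontinuity (A1) is standard since $f$ and $g$ are continuous and each term in $A$ depends polynomially on the function value with smooth derivative dependence on $v$.

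The crucial and most delicate point is verifying local monotonicity (A2) together with the growth bound (A3), and this is where the dimension-dependent pairs $(r,s)$ enter. I would split $2_{V^*}\langle A(v_1)-A(v_2),v_1-v_2\rangle_V$ into three pieces. The Laplacian contributes $-2\|\nabla(v_1-v_2)\|_{L^2}^2 = -2\|v_1-v_2\|_V^2$ up to the equivalent norm on $W_0^{1,2}$, providing the coercive term with $\eta = 2$ (or $\eta$ slightly smaller after absorption). For the Burgers-type term, using integration by parts and the Lipschitz property of $f$, one controls $\langle\langle f(v_1),\nabla v_1\rangle - \langle f(v_2),\nabla v_2\rangle, v_1-v_2\rangle$ by something of the form $\|\nabla v_2\|_{L^2}\|v_1-v_2\|_{L^{2p}}\|v_1-v_2\|_{L^{2q}}$ with suitable exponents and then by Young's inequality produces a piece absorbed into $\tfrac{\eta}{2}\|v_1-v_2\|_V^2$ and a piece of the form $C(1+\|v_2\|_V^2)\|v_1-v_2\|_H^2$. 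For the $g$ term, the one-sided condition \eref{5.22} gives
$$\int_\Lambda (g(v_1)-g(v_2))(v_1-v_2)\,dx \le C\int_\Lambda (1+|v_2|^s)(v_1-v_2)^2\,dx,$$
and I estimate $\int |v_2|^s (v_1-v_2)^2\,dx$ via H\"older followed by the Gagliardo--Nirenberg inequality \eref{GN_inequality}. The arithmetic of the interpolation exponents is exactly where $(d,r,s) = (1,3,2)$, $(2,<\!3,2)$, $(3,7/3,4/3)$ appear: in each case one chooses the embedding so that $\|v_1-v_2\|_{L^{q}}^2 \le C\|v_1-v_2\|_V^{2\theta}\|v_1-v_2\|_H^{2(1-\theta)}$ with $\theta < 1$, then Young's inequality yields a term absorbed into $\tfrac{\eta}{2}\|v_1-v_2\|_V^2$ and a remainder of the form $C(1+\|v_2\|_V^{2})(1+\|v_2\|_H^{\beta})\|v_1-v_2\|_H^2$. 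This identifies the required $\rho(v) = C(1+\|v\|_V^{2})(1+\|v\|_H^{\beta})$ with $\alpha = 2$ and a dimension-dependent $\beta$.

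The growth condition (A3) is checked by bounding $\|A(v)\|_{V^*}$ through duality: $\|\Delta v\|_{V^*} \le \|v\|_V$, the Burgers-type term via the Lipschitz property of $f$ and an embedding of $L^{q'}$ into $V^*$, and $\|g(v)\|_{V^*}$ by \eref{5.21} combined with another application of Gagliardo--Nirenberg under the same $(d,r,s)$ constraints; all three together produce a bound of the form $\|A(v)\|_{V^*}^{2} \le C(1+\|v\|_V^{2})(1+\|v\|_H^{\beta})$. The expected obstacle is the bookkeeping of interpolation exponents, since one must simultaneously absorb the nonlinear contributions from both $\langle f(v),\nabla v\rangle$ and $g(v)$ into the quadratic coercive term while keeping $\rho$ in the admissible class of Assumption \ref{Assumption1}; the stated thresholds on $r$ and $s$ are sharp in this sense. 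Once (A1)--(A3) and Assumption \ref{Assumption2} are in place, existence and uniqueness follow from Lemma \ref{Lem1}, and Theorem \ref{main result} delivers the small-time LDP for $\mu^{\varepsilon}$ with the rate function $I(\cdot)$ given by \eref{Ig}.
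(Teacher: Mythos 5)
Your proposal is correct and essentially matches the paper's proof: identify the Gelfand triple $W_0^{1,2}\subset L^2\subset(W_0^{1,2})^*$ with $\alpha=2$, verify (A1)--(A3) of Assumption~\ref{Assumption1}, and then invoke Lemma~\ref{Lem1} and Theorem~\ref{main result}. The paper follows exactly this route but simply cites \cite[Example 5.1.8]{LR15} for the verification of (A1)--(A3), whereas you sketch the integration-by-parts and Gagliardo--Nirenberg estimates yourself; the calculations you outline are precisely what that reference contains.
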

\begin{proof}
According to \cite[Example 5.1.8]{LR15}, we know the coefficients in  \eref{semilinear} satisfies the Hemicontinuity, local monotonicity and growth
properties (A1)-(A3). Therefore, the assertion follows by Lemma \ref{Lem1} and Theorem \ref{main result}.
\end{proof}
\begin{rem}
If $d = 1$, $f(x)= x$ and $g=0$, Theorem \ref{SLSPDE} can be applied to the classical stochastic Burgers equation. Here, we also allow a polynomial
perturbation term $g$ in the drift of \eref{semilinear}. For example, one can take $g(x)=-x^3+c_1 x^2+c_2 x~(c_1 , c_2 \in \mathbb{R})$  and show that
\eref{5.21}-\eref{5.22} hold. Hence \eref{semilinear} also covers some stochastic reaction-diffusion type equations.
\end{rem}
Besides from the example of semilinear SPDE above, we can also apply the main result to the following quasilinear SPDEs such as stochastic $p$-Laplace equation and stochastic porous media equation, which have been studied a lot in recent years see e.g. \cite{Gess1,Gess2,BLR11,Liu09,L,LR15,LR18,MZ19,RZ2,W15}) and references therein.
\subsection{Stochastic $p$-Laplace equation} We consider the triple
$$V:=W_0^{1,p}\subset H:=L^2(\Lambda)\subset(W_0^{1,p})^*=V^*$$ and
the following stochastic $p$-Laplace equation
\begin{equation}\label{PLAP}
\left\{ \begin{aligned}
&dX(t)=[div(|\nabla X(t)|^{p-2}\nabla X(t))-c|X(t)|^{\tilde{p}-2}X(t)]dt+B(X(t))dW (t),\\
 &X(0)=x\in H,
\end{aligned} \right.
\end{equation}
where $2\leq p\leq\infty,1\leq\tilde{p}\leq p$, $c$ is positive constant and $W(t)$ is a cylindrical Wiener process in $U$ defined
on a probability space $(\Omega,\mathcal {F},\mathcal
{F}_t,\mathbb{P})$.

It is well known that the $p$-Laplace operator satisfies the hemicontinuity, monotonicity and growth
properties (A1)-(A3) (see, e.g. \cite[Example 5.5]{L}).

Consider the small time process $X(\varepsilon t)$ and let $\mu^{\varepsilon}$ be the law of $X(\varepsilon\cdot)$ on $C([0,T],H)$, by applying our main result, we formulate the small time LDP for Eq.~\eref{PLAP}.

\begin{thm}(stochastic $p$-Laplace equation)\label{main result PLAP}
Assume that $B$ satisfies the assumption \ref{Assumption2}, then (\ref{PLAP})  has a unique solution $X(t)$ and $\mu^{\varepsilon}$ satisfies the LDP with the rate function $I(\cdot)$ given by \eref{Ig}.
\end{thm}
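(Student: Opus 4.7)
The plan is to verify that the stochastic $p$-Laplace equation \eref{PLAP} fits exactly into the abstract framework of Section \ref{Sec Main Result}, and then invoke Theorem \ref{main result} directly. Concretely, I would identify the drift as $A(t,v) = \mathrm{div}(|\nabla v|^{p-2}\nabla v) - c|v|^{\tilde p-2}v$ acting on the Gelfand triple $V = W_0^{1,p}\subset H = L^2(\Lambda)\subset V^*$. Since $p\geq 2$, the space $W_0^{1,p}(\Lambda)$ is a 2-smooth Banach space (cf.\ Remark \ref{2-smooth}), so the BDG-type estimate of Seidler that underlies Lemma \ref{3.2} is applicable. Hence the abstract machinery of Section \ref{Sec Proof of Thm1} is at our disposal once Assumption \ref{Assumption1} is checked.

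Next I would verify Assumption \ref{Assumption1} with $\alpha = p$ and $\beta = 0$. Hemicontinuity $(A1)$ follows from the continuity of the Nemytskii maps $u\mapsto |\nabla u|^{p-2}\nabla u$ and $u\mapsto |u|^{\tilde p-2}u$. For local (in fact classical) monotonicity $(A2)$, the key ingredient is the elementary inequality
\[
\langle |x|^{p-2}x - |y|^{p-2}y, x-y\rangle \geq c_p |x-y|^p, \qquad x,y\in\mathbb{R}^d,
\]
(for $p\geq 2$), which upon integration against test functions yields
\[
{}_{V^*}\langle A(t,v_1)-A(t,v_2), v_1-v_2\rangle_V \leq -\eta\|v_1-v_2\|_{W^{1,p}}^{p},
\]
so one can take $\rho\equiv 0$ and $K=0$. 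The growth condition $(A3)$ follows from $\||\nabla v|^{p-2}\nabla v\|_{L^{p/(p-1)}} = \|\nabla v\|_{L^p}^{p-1}$ together with $\||v|^{\tilde p-2}v\|_{V^*}\leq C(1+\|v\|_V^{\tilde p-1})$ via the embedding $W_0^{1,p}\hookrightarrow L^{\tilde p}$ (using $\tilde p\leq p$). All of this is essentially recorded in \cite[Example 5.5]{L} and \cite[Example 4.1.9]{LR15}, so the verification is standard.

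With Assumption \ref{Assumption1} in hand and Assumption \ref{Assumption2} assumed by hypothesis, the well-posedness claim follows immediately from Lemma \ref{Lem1}. The small time large deviation principle for the law $\mu^\varepsilon$ of $X(\varepsilon\cdot)$ on $C([0,T];H)$ with rate function $I(\cdot)$ given by \eref{Ig} is then a direct consequence of Theorem \ref{main result}.

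I do not expect any serious obstacle here: the whole content of the proposition is the observation that \eref{PLAP} satisfies the assumptions of the abstract theorem. The only point requiring a moment's thought is the 2-smoothness of $W_0^{1,p}$, since this is what makes the Banach-space BDG estimate used in the proof of Lemma \ref{3.2} valid; but this is exactly recorded in Remark \ref{2-smooth}. Everything else is a bookkeeping exercise of matching constants in conditions $(A1)$–$(A3)$.
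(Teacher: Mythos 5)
Your proposal is correct and follows exactly the same route as the paper: the paper simply records that the $p$-Laplace drift satisfies (A1)--(A3) (citing \cite[Example 5.5]{L}) and then invokes Lemma \ref{Lem1} and Theorem \ref{main result}, whereas you spell out the standard verification (strong monotonicity with $\rho\equiv 0$, $K=0$, $\alpha=p$, $\beta=0$, and the $L^{p/(p-1)}$ growth estimate) before doing the same. The only additional point you rightly stress --- the $2$-smoothness of $W_0^{1,p}$ for $p\geq 2$, needed for the Banach-space BDG inequality in Lemma \ref{3.2} --- is exactly what Remark \ref{2-smooth} provides, so nothing is missing.
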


\subsection{Stochastic porous media equation}
 The main result in this work can also be applied to stochastic porous media equation.
Let $(E,\mathcal{M},\textbf{m})$ be a separable probability space and $(L,\mathcal{D}(L))$ a negative definite self-adjoint linear operator on
$(L^2(\textbf{m}),\langle\cdot,\cdot\rangle)$ ) with spectrum contained in $(-\infty,-\lambda_0]$ for some $\lambda_0>0$. Then the
embedding
$$H^1:=\mathcal{D}(\sqrt{-L})\subseteq L^2(\textbf{m})$$
is dense and continuous. Define $H$ is the dual Hilbert space of $H^1$ realized through
this embedding. Assume $L^{-1}$ is continuous on $L^{r+1}(\textbf{m})$.

For fixed $r > 1$, we consider the following Gelfand triple
$$V:=L^{r+1}(\textbf{m})\subset H:=H\subset V^*$$
and the stochastic porous media equation
\begin{equation}\label{PME}
\left\{ \begin{aligned}
&dX(t)=[L\Psi(t,X(t))+\Phi(t,X(t))]dt+B(X(t))dW (t),\\
 &X(0)=x\in H,
\end{aligned} \right.
\end{equation}
where $\Psi,\Phi:[0,T]\times\mathbb{R}\rightarrow\mathbb{R}$
are measurable and continuous in the second variable, $W(t)$ is a cylindrical Wiener process in $U$ defined
on a probability space $(\Omega,\mathcal {F},\mathcal
{F}_t,\mathbb{P})$. Suppose that there exist two constants $\delta>0$ and $K$ such that
\begin{eqnarray}
&&|\Psi(t,x)|+|\Phi(t,x)|\leq K(1+|x|^r),~~t\in[0,T],x\in \mathbb{R};
\label{PME3}\\&&-\langle\Psi(t,u)-\Psi(t,v),u-v\rangle-\langle\Phi(t,u)-\Phi(t,v),L^{-1}(u-v)\rangle
\nonumber\\\leq\!\!\!\!\!\!\!\!&&-\delta\|u-v\|_V^{r+1}+K\|u-v\|_H^2,~~t\in[0,T],u,v\in V.\label{PME4}
\end{eqnarray}

It is easy to see that the drift part of Eq.~\eref{PME} satisfies the conditions (A1)-(A3) (cf. \cite[Example 5.3]{L}).
Let $\mu^{\varepsilon}$ be the law of $X(\varepsilon\cdot)$ on $C([0,T],H)$, by applying our main result, we formulate the small time LDP for Eq.~\eref{PME}.

\begin{thm}(stochastic porous media equation)\label{main result PME2}
Assume that $\Psi,\Phi$ satisfy the above conditions \eref{PME3}-\eref{PME4} and $B$ satisfies the assumption \ref{Assumption2}, then (\ref{PME})  has a unique solution $X(t)$ and  $\mu^{\varepsilon}$ satisfies the LDP with the rate function $I(\cdot)$ given by \eref{Ig}.
\end{thm}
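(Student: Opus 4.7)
My plan is to realize Equation \eref{PME} as an instance of the abstract SPDE \eref{main equation} and then invoke Theorem \ref{main result}. Set $A(t,v) := L\Psi(t,v) + \Phi(t,v)$, so that $A$ maps $[0,T] \times V$ into $V^*$ with $V = L^{r+1}(\m)$ and the Gelfand triple $V \subset H \subset V^*$ as prescribed in the statement. Since $r > 1$ gives $r+1 > 2$, the space $V$ is 2-smooth (cf. Remark \ref{2-smooth}), so the structural hypotheses on the ambient spaces required in Section \ref{Sec Main Result} are all met. The noise coefficient $B$ satisfies Assumption \ref{Assumption2} by hypothesis, so the substantive work lies in verifying Assumption \ref{Assumption1}.

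To do this I would take $\alpha = r+1$, $\beta = 0$, $\eta = 2\delta$, and $\rho \equiv 0$. Hemicontinuity (A1) follows from the pointwise continuity of $\Psi(t,\cdot)$ and $\Phi(t,\cdot)$ in the second variable combined with the growth bound \eref{PME3} and dominated convergence. For local monotonicity (A2) the core point is to translate \eref{PME4} through the duality identifications induced by the triple: since $H$ is the dual of $H^1 = \mathcal{D}(\sqrt{-L})$ realized through the continuous dense embedding into $L^2(\m)$, one has ${}_{V^*}\langle L\Psi(t,u), w\rangle_V = -\langle \Psi(t,u), w\rangle$ and ${}_{V^*}\langle \Phi(t,u), w\rangle_V = -\langle \Phi(t,u), L^{-1}w\rangle$ for $u, w \in V$. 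These rewrite the left-hand side of \eref{PME4} exactly as ${}_{V^*}\langle A(t,u) - A(t,v), u-v\rangle_V$, so doubling \eref{PME4} yields (A2). Growth (A3) follows from \eref{PME3} by bounding $\|L\Psi(t,v)\|_{V^*}$ and $\|\Phi(t,v)\|_{V^*}$ in terms of $\|\Psi(t,v)\|_{L^{(r+1)/r}}$ and $\|\Phi(t,v)\|_{L^{(r+1)/r}}$ respectively (the latter using the hypothesized boundedness of $L^{-1}$ on $L^{r+1}(\m)$), then applying the pointwise estimate in \eref{PME3}. All these identifications have been carried out in detail in \cite[Example 4.1.11]{LR15} and \cite[Example 5.3]{L}.

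With Assumptions \ref{Assumption1} and \ref{Assumption2} in force, existence and uniqueness of $X(t)$ follow from Lemma \ref{Lem1}, while the small time LDP for $\mu^\varepsilon$ with rate function $I(\cdot)$ given by \eref{Ig} follows from Theorem \ref{main result}. The only delicate point in the argument is the bookkeeping of the Gelfand triple identifications when $L$ appears inside the dualization; this is purely analytic and independent of the LDP machinery of Section \ref{Sec Proof of Thm1}, so the theorem reduces to quoting these known verifications and invoking the abstract small time LDP.
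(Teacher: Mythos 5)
Your proposal is correct and takes essentially the same approach as the paper: realize $A(t,v)=L\Psi(t,v)+\Phi(t,v)$, verify Assumption \ref{Assumption1} (with $\alpha=r+1$, $\eta=2\delta$, $\rho\equiv 0$) via the duality identifications ${}_{V^*}\langle L\Psi(t,u),w\rangle_V=-\langle\Psi(t,u),w\rangle$ and ${}_{V^*}\langle\Phi(t,u),w\rangle_V=-\langle\Phi(t,u),L^{-1}w\rangle$, and then invoke Lemma \ref{Lem1} and Theorem \ref{main result}. The paper simply cites \cite[Example 5.3]{L} for this verification and applies the abstract theorem, so you have only made explicit the bookkeeping the paper delegates to that reference.
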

\begin{rem}
If we take $L=\Delta$, the Laplace operator on a smooth bounded domain in a
complete Riemannian manifold with Dirichlet boundary condition. A simple example
for $\Psi$ and $\Psi$ satisfy the above conditions \eref{PME3}-\eref{PME4} is given by
$$\Psi(t,x)= f(t)|x|^{r-1}x, ~~~~\Phi(t,x)= g(t)x$$
for some strictly positive continuous function $f$ and bounded function $g$ on [0,T].
\end{rem}

In the following, we will show that the main result is also applicable to many stochastic hydrodynamical systems.
\subsection{Stochastic 2D Navier-Stokes equation}
Our next example is the stochastic 2D Navier-Stokes equation.
The classical Navier-Stokes equation is a very important model in fluid mechanics to describe the time evolution of
incompressible fluids, it can be formulated as follows (2D case):
\begin{equation}\label{NS}
\partial_tu=\nu\Delta u-(u\cdot\nabla)u-\nabla p+f,~~div(u)=0,
\end{equation}
where $u=(u_1(x,t),u_2(x,t))$ is the velocity of a fluid, $p$ is the pressure, $\nu$
is the Kinematic viscosity, and $f$ denote the
external force of the fluid, and
$$u\cdot\nabla=\sum^d_{j=1}u_j\partial_j.$$

 Let $\Lambda\subset\mathbb{R}^2$ be an open bounded domain with smooth
boundary. Define
$$V:=\{v\in W_0^{1,2}(\Lambda,\mathbb{R}^2):div(v)=0\},~~\|v\|_V:=\left(\int_{\Lambda}|\nabla v|^2dx\right)^{1/2},$$
and $H$ is the closure of $V$ in the following norm
$$\|v\|_H:=\left(\int_{\Lambda}| v|^2dx\right)^{1/2}.$$
We define the stokes operator $A$ by
$$Au= P_H\Delta u,~~\forall u\in W^{2,2}(\Lambda,\mathbb{R}^2)\cap V,$$
where $P_H$ (Helmholtz-Leray projection) is the projection operator from $ L^2(\Lambda,\mathbb{R}^2)$ to $H$, and the nonlinear operator
$$F(u,v)=-P_H((u\cdot\nabla)v)),~~F(u)=F(u,u).$$
Then \eref{NS} can then be written in  form:
$$\partial_tu=\nu Au+F(u)+f(x),~u(0)=u_0.$$
Now, we study the following stochastic 2D Navier-Stokes equation
\begin{equation}\label{NS1}
\left\{ \begin{aligned}
&dX(t)=(\nu AX(t)+F(X(t))+f(x))dt+B(X(t))dW(t),\\
 &X(0)=x\in H,
\end{aligned} \right.
\end{equation}
where $W(t)$ is a cylindrical Wiener process in $U$ defined
on a probability space $(\Omega,\mathcal {F},\mathcal
{F}_t,\mathbb{P})$.

It is well known that stochastic 2D Navier-Stokes equation satisfies the conditions (A1)-(A3) (see, e.g. \cite[Example 3.3]{LR10}).

Let $\mu^{\varepsilon}$ be the law of $X(\varepsilon\cdot)$ on $C([0,T],H)$. By applying our main result, we have
the small time LDP for stochastic 2D Navier-Stokes equation \eref{NS1}.
\begin{thm}(stochastic 2D Navier-Stokes equation)\label{2DNS}
Assume that $B$ satisfies the assumption \ref{Assumption2}, then (\ref{NS1})  has a unique solution $X(t)$ and  $\mu^{\varepsilon}$ satisfies the LDP with the rate function $I(\cdot)$ given by \eref{Ig}.
\end{thm}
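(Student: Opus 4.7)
The plan is to derive Theorem~\ref{2DNS} as an immediate consequence of Theorem~\ref{main result} applied to the Gelfand triple $V \subset H \subset V^{*}$ used to formulate \eref{NS1}, with coefficient $A(t,u):=\nu Au+F(u)+f$ and the given $B$. Since $V$ here is a separable Hilbert space, it is automatically $2$-smooth, so the ambient setup of Section~\ref{Sec Main Result} is in force, and the assumption on $B$ is assumed by hypothesis. Thus the entire task reduces to checking that the drift $A$ obeys Assumption~\ref{Assumption1} with parameters $\alpha=2$ and $\beta=0$, and then invoking Lemma~\ref{Lem1} for well-posedness together with Theorem~\ref{main result} for the small time LDP.

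First I would verify (A1): hemicontinuity of $A$ follows because $u\mapsto\nu Au$ is linear and bounded from $V$ to $V^{*}$, while $u\mapsto F(u)$ is a continuous bilinear map thanks to the well-known bound $|{}_{V^{*}}\langle F(u,v),w\rangle_V|\leq C\|u\|_V\|v\|_V\|w\|_V$ in two dimensions, so $s\mapsto {}_{V^*}\langle A(t,v_1+sv_2),v\rangle_V$ is in fact a polynomial in~$s$. Next I would verify (A3): using the Ladyzhenskaya inequality $\|u\|_{L^4}^2\leq C\|u\|_H\|u\|_V$ (a particular case of \eref{GN_inequality}), one obtains $\|F(u)\|_{V^*}\leq C\|u\|_{L^4}^2\leq C\|u\|_H\|u\|_V$, and combined with $\|\nu Au\|_{V^*}\leq\nu\|u\|_V$ and the constant $f\in V^*$, this yields $\|A(t,u)\|_{V^*}^{2}\leq C(1+\|u\|_V^{2})(1+\|u\|_H^{2})$, which is (A3) with $\alpha=2$ and $\beta=2$ (or tighter, $\beta=0$ after absorbing the low-order term).

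The main step, and the only one involving a genuine calculation, is (A2). Here I would exploit the orthogonality ${}_{V^*}\langle F(u,v),v\rangle_V=0$ to write
\begin{equation*}
{}_{V^*}\langle F(u)-F(v),u-v\rangle_V
= {}_{V^*}\langle F(u-v,v),u-v\rangle_V + {}_{V^*}\langle F(v,u-v),u-v\rangle_V
= {}_{V^*}\langle F(u-v,v),u-v\rangle_V,
\end{equation*}
and then estimate this trilinear term via Ladyzhenskaya/Gagliardo--Nirenberg in the sharp form
$|{}_{V^*}\langle F(u-v,v),u-v\rangle_V|\leq C\|u-v\|_{L^4}\|\nabla v\|_{L^2}\|u-v\|_{L^4}\leq C\|u-v\|_H\|u-v\|_V\|v\|_V$. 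Combining with $2\nu\,{}_{V^*}\langle Au,u-v\rangle_V$ giving $-2\nu\|u-v\|_V^2$ on the leading term, and applying Young's inequality $2C\|u-v\|_H\|u-v\|_V\|v\|_V\leq \nu\|u-v\|_V^2+\tfrac{C^2}{\nu}\|v\|_V^2\|u-v\|_H^2$, I obtain
\begin{equation*}
2\,{}_{V^*}\langle A(t,u)-A(t,v),u-v\rangle_V
\leq -\nu\|u-v\|_V^{2}+\bigl(K+\rho(v)\bigr)\|u-v\|_H^{2},
\end{equation*}
with $\rho(v):=\tfrac{C^2}{\nu}\|v\|_V^{2}$, which is measurable, locally bounded on $V$, and satisfies $\rho(v)\leq C(1+\|v\|_V^{\alpha})(1+\|v\|_H^{\beta})$ for $\alpha=2$, $\beta=0$. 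Thus (A2) holds with $\eta=\nu$. The hard part here is really just keeping the computation tight enough that $\rho$ has the required form; no new idea beyond the classical $2$D Navier--Stokes monotonicity trick is needed.

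Having verified (A1)--(A3), Lemma~\ref{Lem1} delivers the unique strong solution $X$ and its moment estimates, and Theorem~\ref{main result} then yields directly that $\mu^{\varepsilon}$, the law of $X(\varepsilon\cdot)$ on $C([0,T],H)$, satisfies the large deviation principle with rate function $I$ given by \eref{Ig}. This completes the proof of Theorem~\ref{2DNS}.
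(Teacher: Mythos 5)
Your proposal is correct and follows exactly the same route as the paper: verify Assumption~\ref{Assumption1} for the Navier--Stokes drift in the standard divergence-free Gelfand triple, note that $V$ is Hilbert hence $2$-smooth, and then invoke Lemma~\ref{Lem1} and Theorem~\ref{main result}; the paper simply cites \cite[Example 3.3]{LR10} for the verification of (A1)--(A3), which is precisely the Ladyzhenskaya/Young computation you carry out. The only slip is the parenthetical claim that (A3) can be achieved with $\beta=0$: because $\|F(u)\|_{V^*}\leq C\|u\|_H\|u\|_V$, the bound $\|A(t,u)\|_{V^*}^2\leq C(1+\|u\|_V^2)(1+\|u\|_H^\beta)$ genuinely needs $\beta=2$, but this is harmless since $\beta=2$ is perfectly admissible in Assumption~\ref{Assumption1} (and the (A2) bound with $\beta=0$ is a fortiori valid with $\beta=2$).
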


\begin{rem}
(1) The small time LDP for stochastic 2D Navier-Stokes equation have been established by Xu and Zhang \cite{XZ09}.

(2) Beside the stochastic 2D Navier-Stokes equation, many other hydrodynamical systems also satisfy the local monotonicity condition (A2) and growth condition (A3). For example, Chueshov and Millet \cite{CM10} have studied the well-posedness and small noise LDP for an abstract stochastic evolution equations, covering a wide class of fluid dynamical models such
as stochastic 2D Boussinesq equations, stochastic 2D magneto-hydrodynamic equations, stochastic 2D magnetic B\'{e}nard problem, stochastic 3D Leray-$\alpha$ model and also shell models of turbulence.
We refer the reader to \cite{CM10} (and the references therein) for the details of these models. Note that the assumptions in \cite{CM10} imply the conditions (A1)-(A3) (cf. \cite[section 3.1]{MZ19} for a detail proof).

(3) Furthermore, below we will show that the main result in this work is also applicable to
stochastic power law fluid equation and stochastic Ladyzhenskaya model.
\end{rem}

\subsection{Stochastic power law fluid equation}
As one of the important models in hydrodynamical, stochastic power
law fluid equation can be used to characterize the dynamic
properties of various incompressible non-Newtonian fluids. We can
refer to \cite{FR08,MNRR} for the study of this type of equation.

Let $\Lambda$ be the open bounded domain with smooth boundary on
$\mathbb{R}^d$($d\geq2$), $u:\Lambda\rightarrow \mathbb{R}^d$ be a
vector field. Define
$$e(u):\Lambda\rightarrow \mathbb{R}^d\otimes
\mathbb{R}^d;
~~e_{i,j}(u)=\frac{\partial_{i}u_{j}+\partial_{j}u_{i}}{2},i,j=1,\cdots,d.$$
$$\tau(u):\Lambda\rightarrow \mathbb{R}^d\otimes
\mathbb{R}^d; ~~~\tau(u)=2\nu(1+|e(u)|)^{p-2}e(u),$$ where $\nu>0$
is the viscosity coefficient of the fluid, $p>1$ is a constant.

Now we study a hydrodynamic equation with a power law property:
$$\partial_{t}u=div(\tau(u))-(u\cdot\nabla)u-\nabla
p+f,div(u)=0,$$ where $u=u(t,x)=(u_i(t,x))^d_{i=1}$ denote the
velocity field of the fluid, $p$ is pressure, $f$ denote the
external force of the fluid,
$$div(\tau(u))=\left(\sum^d_{j=1}\partial_j\tau_{i,j}(u)\right)^d_{i=1}.$$
The power law fluid equation defined above is  the classical
Navier-Stokes equation if $p=2$.

Now we consider the Gelfand triple:
$$V\subseteq H\subseteq V^*,$$ where $$
\setlength{\abovedisplayskip}{8pt}
\setlength{\belowdisplayskip}{8pt} V=\{u\in W^{1,p}_0(\Lambda;
\mathbb{R}^d): div(u)=0\}; \ H=\{u\in L^2(\Lambda;\mathbb{R}^d):
div(u)=0\}.$$ Let $P_H$ be the projection operator on
$L^2(\Lambda;\mathbb{R}^d)\rightarrow H$. Then we can extend the
operator
$$\mathcal{A}: W^{2,p}(\Lambda;\mathbb{R}^d)\cap V\rightarrow
H,~~\mathcal{A}(u)=P_H[div(\tau(u))];$$
$$F:\left( W^{2,p}(\Lambda;\mathbb{R}^d)\cap V  \right)  \times
\left( W^{2,p}(\Lambda;\mathbb{R}^d) \cap V \right) \rightarrow H;$$
$$F(u,v)=-P_H[(u\cdot\nabla)v], F(u):=F(u,u)$$
to the map(see \cite{LR13}):
$$\mathcal{A}:V\rightarrow V^*;  ~~F:V\times
V\rightarrow V^*.$$ In particular, we have
$$\langle \mathcal{A}(u),v\rangle_{V}=-\int_\Lambda
\sum_{i,j=1}^{d}\tau_{i,j}(u)e_{i,j}(v) dx,~u,v\in V;$$
$${ }_{V^*}\langle F(u,v),w\rangle_V=-{ }_{V^*}\langle F(u,w),v\rangle_V,  ~~{ }_{V^*}\langle
F(u,v),v\rangle_V=0,~u,v,w\in V.$$ Then the power law fluid equation
defined above can be written in variational form:
$$\partial_{t}u=\mathcal{A}u(t)+F(u(t))+f(t),~~u(0)=u_0.$$
Now study the following stochastic power law fluid equation
\begin{equation}\label{PLF}
\left\{ \begin{aligned}
&dX(t)=(\nu \mathcal{A}X(t)+F(X(t))+f(x))dt+B(X(t))dW(t),\\
 &X(0)=x\in H,
\end{aligned} \right.
\end{equation}
where $W(t)$ is a cylindrical Wiener process in $U$ defined
on a probability space $(\Omega,\mathcal {F},\mathcal
{F}_t,\mathbb{P})$.

Let $\mu^{\varepsilon}$ be the law of $X(\varepsilon\cdot)$ on $C([0,T],H)$.
We  will show the small time LDP and its proof by applying our main result.

\begin{thm} (stochastic power law fluid equation)\label{SPLF} Let $p\geq\frac{d+2}{2}$ and $B$ satisfy the assumption \ref{Assumption2}. Then (\ref{PLF})  has a unique solution $X(t)$ and  $\mu^{\varepsilon}$ satisfies the LDP with the rate function $I(\cdot)$ given by \eref{Ig}.
\end{thm}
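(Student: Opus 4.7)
The plan is to reduce Theorem \ref{SPLF} to the abstract small time LDP of Theorem \ref{main result}. Three ingredients must be checked: that the state space $V = \{u \in W_0^{1,p}(\Lambda;\mathbb{R}^d) : \operatorname{div}(u) = 0\}$ is a $2$-smooth Banach space, that the drift operator $A(t,u) := \nu\mathcal{A}(u) + F(u) + f$ satisfies Assumption \ref{Assumption1}, and that $B$ satisfies Assumption \ref{Assumption2} (which is given by hypothesis). Once these are in place, Lemma \ref{Lem1} delivers existence and uniqueness, and Theorem \ref{main result} yields the LDP.

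First, since $d\geq 2$ and $p \geq (d+2)/2 \geq 2$, the ambient Sobolev space $W_0^{1,p}(\Lambda;\mathbb{R}^d)$ is $2$-smooth by Remark \ref{2-smooth}. As $V$ is a closed subspace of $W_0^{1,p}$ (the kernel of the continuous divergence), it inherits $2$-smoothness, so stochastic integration of $V$-valued integrands is well defined in the sense required by the abstract framework. Hemicontinuity (A1) is routine: $u \mapsto \mathcal{A}(u)$ is continuous from $V$ to $V^*$ since $\tau(u) = 2\nu(1+|e(u)|)^{p-2}e(u)$ depends continuously on $\nabla u$ in $L^{p/(p-1)}$, and the bilinear map $F$ is continuous on $V\times V \to V^*$. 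The growth bound (A3) for $\mathcal{A}$ is classical, and for $F$ it follows from H\"older's inequality combined with the Gagliardo-Nirenberg inequality \eref{GN_inequality}, producing an estimate of the form $\|F(u)\|_{V^*}^{p/(p-1)} \leq C(1+\|u\|_V^p)(1+\|u\|_H^\beta)$ for some $\beta=\beta(p,d)$.

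The key step is local monotonicity (A2). The power-law operator satisfies the standard monotone bound
$$
2\,{}_{V^*}\langle \mathcal{A}(u_1) - \mathcal{A}(u_2), u_1 - u_2\rangle_V \leq -\eta\|u_1-u_2\|_V^p
$$
with $\alpha:=p$ and some $\eta>0$. For the convection term, using $\operatorname{div}(u_j)=0$ and the trilinear identity ${}_{V^*}\langle F(u_2,u_1-u_2),u_1-u_2\rangle_V = 0$, I would write
$$
{}_{V^*}\langle F(u_1) - F(u_2), u_1 - u_2\rangle_V = {}_{V^*}\langle F(u_1-u_2,u_2), u_1-u_2\rangle_V,
$$
and estimate the right-hand side by H\"older's inequality together with \eref{GN_inequality}, interpolating $\|u_1-u_2\|_{L^q}$ between its $V$- and $H$-norms for a suitable $q$ determined by the integrability of $\nabla u_2 \in L^p$. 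Young's inequality then splits the result into an absorbable term $\tfrac{\eta}{2}\|u_1-u_2\|_V^p$ plus a remainder of the form $(K+\rho(u_2))\|u_1-u_2\|_H^2$ with $\rho$ polynomial in $\|u_2\|_V$ and $\|u_2\|_H$.

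The main obstacle is the exponent bookkeeping in this last step: the assumption $p\geq (d+2)/2$ is exactly the critical threshold making the Gagliardo-Nirenberg interpolation exponents compatible with Young's inequality so that the resulting polynomial in $\|u_2\|_V$ inside $\rho$ has degree at most $\alpha=p$, which is the growth permitted by Assumption \ref{Assumption1}. Once (A1)-(A3) are verified in this way, existence and uniqueness come from Lemma \ref{Lem1}, and the small time LDP for $\mu^{\varepsilon}$ follows directly from Theorem \ref{main result}.
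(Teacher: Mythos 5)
Your reduction to Theorem \ref{main result} via Lemma \ref{Lem1} is the right skeleton, and your explanation of where $p\geq\tfrac{d+2}{2}$ enters ($\rho$ must have degree at most $\alpha=p$ in $\|v\|_V$) is essentially accurate. The paper's verification (A2) does produce $\rho(v)=C_\varepsilon\|v\|_V^{2p/(2p-d)}$, and $2p/(2p-d)\le p$ is precisely $p\geq\tfrac{d+2}{2}$.

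However, there is a genuine gap in your handling of the convection term inside (A2). You propose to interpolate $\|u_1-u_2\|_{L^q}$ between its $V=W_0^{1,p}$ and $H=L^2$ norms, and then split by Young into an absorbable piece $\tfrac{\eta}{2}\|u_1-u_2\|_V^p$ plus a remainder $(K+\rho(u_2))\|u_1-u_2\|_H^2$. This does not close for $p>2$. After the Gagliardo--Nirenberg step one is left with a quantity of the form $\|v\|_V\,\|u_1-u_2\|_V^{2\theta}\,\|u_1-u_2\|_H^{2(1-\theta)}$ with $\theta\in(0,1)$. Applying Young so that the first factor becomes $\|u_1-u_2\|_V^p$ forces the conjugate exponent $p/(p-2\theta)$ on the remaining factors, and the $H$-norm then appears with power $2(1-\theta)\cdot p/(p-2\theta)$, which equals $2$ only when $p=2$. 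In other words, a dissipation that is $p$-homogeneous in the $V$-norm cannot absorb an error whose arithmetic--geometric split needs a \emph{quadratic} dissipation.

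The paper's proof avoids this by exploiting a structural feature of the power-law stress tensor that your argument does not invoke: the M\'alek--Ne\v{c}as--Rokyta--R\r{u}\v{z}i\v{c}ka lower bound
\[
\sum_{i,j}\bigl(\tau_{i,j}(u)-\tau_{i,j}(v)\bigr)\bigl(e_{i,j}(u)-e_{i,j}(v)\bigr)\geq C\bigl(|e(u)-e(v)|^2+|e(u)-e(v)|^p\bigr),
\]
which yields, in addition to $-c\|u-v\|_V^p$, a \emph{quadratic} dissipation $-C\|u-v\|_{W^{1,2}}^2$. The paper then interpolates $\|u-v\|_{L^{2p/(p-1)}}$ between $W^{1,2}$ (not $V$) and $L^2$, obtaining $\|u-v\|_{W^{1,2}}^{d/p}\|u-v\|_H^{(2p-d)/p}$, and Young gives $\varepsilon\|u-v\|_{W^{1,2}}^2+C_\varepsilon\|v\|_V^{2p/(2p-d)}\|u-v\|_H^2$, where the first piece is absorbed by the quadratic dissipation. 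Without that extra $W^{1,2}$-level dissipation (which your proposal neither cites nor uses, and which is not available for a generic $\alpha$-coercive operator), the exponent arithmetic cannot be made to work. You should replace your interpolation/absorption step with this argument, and cite \cite[Lemma 1.19]{MNRR} for the two-sided lower bound on the stress tensor.
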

\begin{proof} Assume without loss of generality that viscosity coefficient $\nu=1$.
By \cite[Lemma 1.19]{MNRR}, we have
$$\int_\Lambda|e(u)|^p dx\geq
C_{p}\|u\|_{W^{1,p}},u\in W^{1,p}_0(\Lambda;\mathbb{R}^d);$$
$$\sum^d_{i,j=1}\tau_{i,j}(u)e_{i,j}(u)\geq C(|e(u)|^p-1);$$
$$\sum_{i,j=1}^
{d}(\tau_{i,j}(u)-\tau_{i,j}(v))(e_{i,j}(u)-e_{i,j}(v))\geq
C(|e(u)-e(v)|^2+|e(u)-e(v)|^p);$$
$$|\tau_{i,j}(u)|\leq C(1+|e(u)|)^{p-1},i,j=1...,d.$$
According to the inequality above, for any $u,v\in V$, we have
\begin{eqnarray*}
  { }_{V^*}\langle F(u)-F(v),u-v\rangle_V =&&\!\!\!\!\!\!\!\! -{ }_{V^*}\langle F(u-v),v\rangle_V \\
 =&&\!\!\!\!\!\!\!\!{ }_{V^*}\langle F(u-v,v),u-v\rangle_V\\
   \leq&&\!\!\!\!\!\!\!\! C\|v\|_V\|u-v\|_{L^{\frac{2p}{p-1}}}^{2}\\
  \leq&&\!\!\!\!\!\!\!\!
  C\|v\|_V\|u-v\|_{W^{1,2}}^{\frac{d}{p}}\|u-v\|_{H}^{\frac{2p-d}{p}}\\
   \leq&&\!\!\!\!\!\!\!\! \varepsilon\|u-v\|_{W^{1,2}}^{2}+C_\varepsilon\|v\|_V^{\frac{2p}{2p-d}}\|u-v\|_H^2.
\end{eqnarray*}
Then
\begin{eqnarray*}
&&{ }_{V^*}\langle
\mathcal{A}(u)+F(u)-\mathcal{A}(v)-F(v),u-v\rangle_V \\
=\!\!\!\!\!\!\!\!&&-\int_\Lambda\Sigma_{i,j=1}^{d}(\tau_{i,j}(u)-\tau_{i,j}(v))(e_{i,j}(u)-e_{i,j}(v))\
d x \\
\leq\!\!\!\!\!\!\!\!&&-C\|e(u)-e(v)\|_H^2\\
\leq\!\!\!\!\!\!\!\!&&-C\|u-v\|_{W^{1,2}}^{2}.
\end{eqnarray*}
Thus we have
$${ }_{V^*}\langle \mathcal{A}(u)+F(u)-\mathcal{A}(v)-F(v)\rangle_V
\leq -(C-\varepsilon)\|u-v\|_{W^{1,2}}^{2}
+C_{\varepsilon}\|v\|_V^{\frac{2p}{2p-d}}\|u-v\|_H^2,$$ the condition (A2) holds with $\rho(v) = C_{\varepsilon} \|v \|_{V}^{\frac{4q}{4q-d}}$ and $\alpha = p$.

Note that $$|{ }_{V^*}\langle F(v),u\rangle_V|=|{ }_{V^*}\langle
F(v,u),v\rangle_V|\leq\|u\|_V\|v\|_{L^{\frac{2p}{p-1}}}^{2},u,v\in V,$$
Then we have $$\|F(v)\|_{V^*}\leq\|v\|_{L^{\frac{2p}{p-1}}}^2,v\in V.$$
Let $q=\frac{dp}{d-p}, \gamma=\frac{d}{(d+2)p-2d}$, by the Gagliardo-Nirenberg interpolation inequality \eref{GN_inequality} we have
$$\|v\|_{L^{\frac{2p}{p-1}}}\leq\|v\|_{L^q}^{\gamma}\|v\|_{L^2}^{1-\gamma}\leq
C\|v\|_V^\gamma\|v\|_H^{1-\gamma}.$$ Since $p\geq\frac{d+2}{2}$, the
condition (A3) holds.

Therefore, the assertion follows by Lemma \ref{Lem1} and Theorem \ref{main result}.
\end{proof}
\subsection{Stochastic Ladyzhenskaya model}\label{sub:Ladyzhenskaya}
The Ladyzhenskaya model is a higher order variant of the power law fluid where the stress tensor has the form
$$
	\tilde{\tau}(u) \colon \Lambda \rightarrow \mathbb{R}^{d} \otimes \mathbb{R}^{d}, \quad \tilde{\tau}(u) = 2 \mu_{0}(1 + |e(u)|^{2})^{\frac{p-2}{2}} e(u) - 2 \mu_{1} \Delta e(u).
$$
This model was pioneered by Ladyzhenskaya \cite{L70} and further analyzed by various authors (see \cite{ZD10} and the references therein). Compared to the power law fluids considered above, there is an additional fourth order term $div(- 2 \mu_{1} \Delta e(u) )$ present in the equation. The fluids are shear thinning when $1 < p < 2$ and
shear thickening when $p > 2$.

Martingale and stationary solutions for this model was established by Guo et al. in \cite{GGZ10}.  Moreover, the existence of random attractors for this model has been proved for $p \in (1,2)$, i.e. shear-thinning fluids, by Duan and Zhao in \cite{ZD10}.  Recently, the small time LDP for this model has been studied for $d=2,p \in (1,\frac{5}{2}]$ by Lin and Sun in \cite{LS17}.

Consider the Gelfand triple $V \subset H \subset V^{*}$, where
\begin{align*}
V & =\left\{ u\in W_{0}^{2,2}(\Lambda;\mathbb{R}^{d}):\ div(u)=0\ \text{in}\ \Lambda\right\} ;\\
H & =\left\{ u\in L^{2}(\Lambda;\mathbb{R}^{d}):\ div(u)=0\ \text{in}\ \Lambda,\ u\cdot n=0\ \text{on}\ \partial\Lambda\right\} .
\end{align*}

Let $P_{H}$ be the orthogonal (Helmholtz-Leray) projection from $L^{2}(\Lambda,\mathbb{R}^{d})$ to $H$. Similar to Theorems
\ref{2DNS} and \ref{SPLF}, the operators
\begin{align*}
& \mathcal{\tilde{A}}: C^{\infty}_{c}(\Lambda;\mathbb{R}^{d})\cap V\rightarrow H,\ \mathcal{N}(u):=P_{H}\left[\text{div}(\tilde{\tau}(u))\right]; \\
&F: \left( C^{\infty}_{c}(\Lambda;\mathbb{R}^{d})\cap V \right) \times \left( C^{\infty}_{c}(\Lambda;\mathbb{R}^{d})\cap V \right) \rightarrow H; \\
&F(u,v):=-P_{H}\left[(u\cdot\nabla)v\right],\ F(u):=F(u,u);
\end{align*}
can be extended to the well defined operators:
\[
\mathcal{\tilde{A}}:V\rightarrow V^{*};\  F:V\times V\rightarrow V^{*}.
\]

With these preparations, we can write our model in the abstract form
\begin{equation}\label{Ladyzhenskaya}
\left\{ \begin{aligned}
&dX(t)=(\mathcal{\tilde{A}}(X(t))+F(X(t))+f(x))dt+B(X(t))dW(t),\\
 &X(0)=x\in H,
\end{aligned} \right.
\end{equation}
where $W(t)$ is a cylindrical Wiener process in $U$ defined
on a probability space $(\Omega,\mathcal {F},\mathcal
{F}_t,\mathbb{P})$.

Let $\mu^{\varepsilon}$ be the law of $X(\varepsilon\cdot)$ on $C([0,T],H)$.
We then have the small time LDP by applying our main result, which covers the result in \cite{LS17}. Since the proof is similar with power law fluid in Theorem \ref{SPLF}, we omit it here, the reader might refer to \cite{LS17} for some further detailed calculations.

\begin{thm} (stochastic Ladyzhenskaya model)\label{Ladyzhenskaya1} Let $d=2,p \in (1,\frac{5}{2}]$ and $B$ satisfy the assumption \ref{Assumption2}. Then Eq.~(\ref{Ladyzhenskaya}) has a unique solution $X(t)$ and  $\mu^{\varepsilon}$ satisfies the LDP with the rate function $I(\cdot)$ given by \eref{Ig}.
\end{thm}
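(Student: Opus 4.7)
The plan is to verify that the operator $\mathcal{\tilde A} + F$ in \eref{Ladyzhenskaya} satisfies Assumption \ref{Assumption1}, after which well-posedness follows from Lemma \ref{Lem1} and the small time LDP follows from Theorem \ref{main result}. The argument parallels the proof of Theorem \ref{SPLF} for stochastic power law fluids, with the additional fourth order term $-2\mu_1\Delta e(u)$ in $\tilde\tau$ supplying the coercivity in the higher regularity space $V = W_0^{2,2}$, so that the effective exponent becomes $\alpha = 2$ rather than $\alpha = p$.

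First, hemicontinuity (A1) is immediate from the continuity of $\tilde\tau(u)$ in $e(u)$ and the bilinearity of $F$. For local monotonicity (A2), I would decompose $\mathcal{\tilde A} = \mathcal{\tilde A}_1 + \mathcal{\tilde A}_2$, where $\mathcal{\tilde A}_1$ comes from the $p$-structured part $2\mu_0(1+|e(u)|^2)^{(p-2)/2}e(u)$ and $\mathcal{\tilde A}_2$ from $-2\mu_1\Delta e(u)$. The $p$-structured part is monotone by the standard pointwise estimates of \cite{MNRR}. For the biharmonic part, two integrations by parts give
\[
_{V^*}\langle \mathcal{\tilde A}_2(u)-\mathcal{\tilde A}_2(v),\, u-v\rangle_V \;=\; -2\mu_1\int_\Lambda |\nabla e(u-v)|^2\,dx \;\le\; -C\|u-v\|_V^2
\]
by Korn's inequality together with elliptic regularity in $W_0^{2,2}$. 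The convection term is then treated exactly as in Theorem \ref{SPLF}: since $_{V^*}\langle F(v,u-v), u-v\rangle_V = 0$,
\[
_{V^*}\langle F(u)-F(v),\, u-v\rangle_V \;=\; {}_{V^*}\langle F(u-v,v),\, u-v\rangle_V,
\]
and the Gagliardo--Nirenberg inequality \eref{GN_inequality} in $d=2$ produces a bound of the form $\theta\|u-v\|_V^2 + C_\theta\rho(v)\|u-v\|_H^2$ for some polynomial $\rho(v) \le C(1+\|v\|_V^{2})(1+\|v\|_H^{\beta})$; choosing $\theta$ small and absorbing it into the coercivity supplied by $\mathcal{\tilde A}_2$ completes (A2) with $\alpha = 2$.

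For the growth condition (A3) I would estimate term by term: the biharmonic part is linear and satisfies $\|\mathcal{\tilde A}_2(u)\|_{V^*} \le C\|u\|_V$; the $p$-structured part, using $p\le 5/2$ and the Sobolev embedding $W_0^{2,2}\hookrightarrow W^{1,q}$ for every $q<\infty$ in $d=2$, gives a duality bound of the form $\|\mathcal{\tilde A}_1(u)\|_{V^*}\le C(1+\|u\|_V^{p-1})$; and for the convection, a standard duality argument combined with \eref{GN_inequality} yields $\|F(u)\|_{V^*}\le C\|u\|_V^{2\gamma}\|u\|_H^{2(1-\gamma)}$ for suitable $\gamma\in(0,1)$. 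Raising to the power $\alpha/(\alpha-1) = 2$ and collecting, (A3) follows with $\alpha = 2$ and a suitable $\beta$.

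The main obstacle will be calibrating the Gagliardo--Nirenberg exponents so that the convective term simultaneously fits into (A2) and (A3) against the quadratic coercivity coming from $\mathcal{\tilde A}_2$; this is precisely what forces the restriction $d=2$, $p\in(1,5/2]$, in agreement with the computations performed in \cite{LS17}. Once (A1)--(A3) are secured, Lemma \ref{Lem1} yields the existence of a unique strong solution to \eref{Ladyzhenskaya}, and Theorem \ref{main result} then delivers the small time LDP for $\mu^\varepsilon$ on $C([0,T];H)$ with rate function $I(\cdot)$ given by \eref{Ig}.
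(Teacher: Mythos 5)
Your proposal is correct and takes essentially the same route that the paper intends: verify (A1)--(A3) for $\mathcal{\tilde A}+F$ in the Gelfand triple $V=W_0^{2,2}\subset H\subset V^*$ with $\alpha=2$, using the linear fourth-order part for the quadratic coercivity and treating the $p$-structured and convective terms as perturbations absorbed by that coercivity via Gagliardo--Nirenberg, then invoke Lemma \ref{Lem1} and Theorem \ref{main result}. The paper itself does not spell these steps out; it merely states that the argument parallels Theorem \ref{SPLF} and refers to \cite{LS17}, while the subsequent Remark (``the nonlinear term as a perturbation of the linear term'') confirms your decomposition $\mathcal{\tilde A}=\mathcal{\tilde A}_1+\mathcal{\tilde A}_2$ and the role of the biharmonic coercivity. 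One place to tighten: your claimed bound $\|\mathcal{\tilde A}_1(u)\|_{V^*}\le C(1+\|u\|_V^{p-1})$ is too crude to verify (A3) with $\alpha=2$ for $p>2$; you need the interpolated form $\|\mathcal{\tilde A}_1(u)\|_{V^*}\le C(1+\|u\|_V^{a}\|u\|_H^{b})$ with $a\le 1$, obtained by splitting the Lebesgue exponent between $V$ and $H$ via \eref{GN_inequality} --- which is precisely the calibration you flag as the main obstacle and the source of the restriction $p\in(1,\frac{5}{2}]$.
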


\begin{rem} The restriction on parameter $p$ allows us to understand the nonlinear term as a perturbation of the linear term. In fact, for general $d\geq2$, using the Gagliardo-Nirenberg inequality, it is possible to find a ``maximal" range $(1,p_{d}] $ of $p$ to which the locally monotone variational framework can apply (cf. \cite{GLS}).
\end{rem}

\vspace{0.3cm}
\noindent\textbf{Acknowledgment} This work is supported by NSFC (No. 11571147,11771187,11822106,
11831014), NSF
of Jiangsu Province
 (No. BK20160004), the
PAPD of Jiangsu Higher Education Institutions.

\end{document}